\documentclass[11pt]{article}
\usepackage[margin=1.1in]{geometry}
\usepackage{amsmath,amssymb,amsthm}

\usepackage[colorlinks=true,allcolors=blue]{hyperref}

\newtheorem{theorem}{Theorem}[section]
\newtheorem{lemma}[theorem]{Lemma}
\newtheorem{proposition}[theorem]{Proposition}
\newtheorem{corollary}[theorem]{Corollary}
\newtheorem{conjecture}[theorem]{Conjecture}
\theoremstyle{definition}
\newtheorem{definition}[theorem]{Definition}
\theoremstyle{remark}
\newtheorem{remark}[theorem]{Remark}

\newcommand{\R}{\mathbb{R}}
\newcommand{\N}{\mathbb{N}}
\newcommand{\supp}{\operatorname{supp}}
\newcommand{\diam}{\operatorname{diam}}

\newcommand{\Dm}{D_\mu}

\title{Square Functions and Rectifiability\\
under Monotone Transformations of the Density}

\author{Triet M.\ Le\thanks{Spatiolyx LLC; \href{mailto:tml@spatiolyx.ai}{\texttt{tml@spatiolyx.ai}}}}

\begin{document}

\maketitle

\begin{abstract}
Let $\mu$ be an $n$-AD-regular measure in $\R^d$. Chousionis, Garnett, Le and Tolsa
\cite{CGLT} proved that $\mu$ is uniformly $n$-rectifiable if and only if the square
function built from the density differences
$\Delta_\mu(x,r)=\mu(B(x,r))/r^n-\mu(B(x,2r))/(2r)^n$ satisfies a Carleson condition.
In this paper we show that the same characterization holds if the density is first
composed with a function $F$ which is bi-Lipschitz on the interval
$[c_0^{-1},c_0]$ determined by the AD-regularity constant $c_0$. The main example is
$F=\log$, introduced
in \cite{Le-URJEPA}, for which the square function takes the scale-invariant form
$\Delta_\mu^{\log}(x,r) = \log\bigl(\mu(B(x,r))/\mu(B(x,2r))\bigr)+n\log 2$. We give a
complete proof, extend the statement to the smooth square functions of \cite{CGLT},
where the density is replaced by the convolution of $\mu$ with a Gaussian or a more
general radial kernel, discuss what happens when $F$ is not bi-Lipschitz, and treat the
case $\mu(\R^d)<\infty$, where the behavior of $F$ near zero enters in only one of the
two implications. We also show that the qualitative characterization of
$n$-rectifiable measures by Tolsa and Toro \cite{TT}, in terms of the same square function at
$\mu$-almost every point, holds after composition with any locally bi-Lipschitz $F$.
This requires neither AD-regularity nor doubling, and for $F=\log$ the condition
$\lim_{r\to0}\Delta_\mu(x,r)=0$ becomes $\lim_{r\to0}\mu(B(x,r))/\mu(B(x,2r))=2^{-n}$.
\end{abstract}

\section{Introduction and statement of results}

Throughout the paper $n$ and $d$ are integers with $0<n<d$. A Radon measure $\mu$ in $\R^d$ is
\emph{$n$-dimensional Ahlfors--David regular} ($n$-AD-regular) with constant $c_0\ge 1$ if
\begin{equation}\label{eq:ADR}
c_0^{-1}r^n \le \mu(B(x,r)) \le c_0 r^n
\qquad\text{for all } x\in\supp(\mu),\ 0<r\le\diam(\supp(\mu)).
\end{equation}
We always assume that
$\diam(\supp(\mu))>0$, that is, the $\supp(\mu)$ is not a single point, since
otherwise \eqref{eq:ADR} is trivial.
Uniform $n$-rectifiability is understood in the sense of David and Semmes
\cite{DS1,DS2}: there exist $\theta,M>0$ such that for all $x\in\supp(\mu)$ and
$0<r\le\diam(\supp(\mu))$ there is a Lipschitz map
$\rho\colon B_n(0,r)\subset\R^n\to\R^d$ with $\operatorname{Lip}(\rho)\le M$ and
$\mu\bigl(B(x,r)\cap\rho(B_n(0,r))\bigr)\ge\theta r^n$. The restriction
$r\le\diam(\supp(\mu))$ agrees with \eqref{eq:ADR}. If $\mu(\R^d)=\infty$, then
$\diam(\supp(\mu))=\infty$, so this imposes no upper restriction on $r$. If
$\supp(\mu)$ is bounded, the restriction is necessary, since
$\mu(B(x,r))\le\mu(\R^d)<\theta r^n$ for sufficiently large $r$.
We denote
\[
\Dm(x,r):=\frac{\mu(B(x,r))}{r^n},
\qquad
\Delta_\mu(x,r):=\Dm(x,r)-\Dm(x,2r).
\]

Recall the following theorem of Chousionis, Garnett, Le and Tolsa \cite{CGLT}. It is
stated there for all $n$-AD-regular measures, with no restriction on the total mass;
the case $\mu(\R^d)<\infty$ is discussed in \cite[Section 2.1]{CGLT}. 

\begin{theorem}[{\cite[Theorem 1.1]{CGLT}}]\label{thm:CGLT}
Let $\mu$ be an $n$-AD-regular measure in $\R^d$. Then $\mu$ is uniformly
$n$-rectifiable if and only if there exists a constant $c$ such that, for any ball
$B(x_0,R)$ centered at $\supp(\mu)$,
\begin{equation}\label{eq:carleson}
\int_0^R\!\!\int_{x\in B(x_0,R)} |\Delta_\mu(x,r)|^2\, d\mu(x)\,\frac{dr}{r}
\;\le\; c\,R^n.
\end{equation}
\end{theorem}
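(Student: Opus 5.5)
The plan is to prove the two implications by different routes. For necessity we control $\Delta_\mu$ by flatness coefficients that are already known to satisfy a Carleson condition for uniformly rectifiable measures. For sufficiency we use a compactness (blow-up) argument that reduces the problem to classifying measures with $\Delta_\mu\equiv 0$, and then conclude with the David--Semmes bilateral weak geometric lemma (BWGL).

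\emph{Necessity.} Write
\[
\Delta_\mu(x,r)=\int\psi_r(x-y)\,d\mu(y),\qquad
\psi_r(z)=r^{-n}\Bigl(\chi_{B(0,1)}-2^{-n}\chi_{B(0,2)}\Bigr)(z/r).
\]
The kernel $\psi_r$ integrates to zero against $\mathcal H^n|_L$ for every $n$-plane $L$ through the origin. Since $\psi_r$ is not smooth, I would not compare $\Delta_\mu$ directly with the $\alpha$-numbers. Instead I would proceed in three steps.
\begin{enumerate}
\item Use the big pieces of Lipschitz graphs (equivalently, a corona decomposition) of uniformly rectifiable measures to reduce the estimate to $\mu=\mathcal H^n|_\Gamma$ with $\Gamma$ a Lipschitz graph of small constant, plus Carleson-packed error terms.
\item On a Lipschitz graph, prove
\[
\int_0^\infty\!\!\int|\Delta_\mu(x,r)|^2\,d\mu\,\frac{dr}{r}\lesssim \|\nabla A\|_2^2
\]
(localized), where $\Gamma=\{(y,A(y))\}$. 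To do this, write $\mu(B(x,r))$ as the Lebesgue measure of a sublevel set $\{y:|y-x'|^2+|A(y)-A(x')|^2<r^2\}$ weighted by the area factor, linearize in $A$, and estimate the linear term by Plancherel. The quadratic remainder is bounded by $\beta$-type quantities, which are Carleson by the Dorronsoro/Jones-type estimates.
\item Sum over the corona stopping regions.
\end{enumerate}

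\emph{Sufficiency.} This is the main obstacle. I would argue by contradiction in the style of David--Semmes, aiming to show that $\mu$ satisfies the BWGL. The Carleson condition \eqref{eq:carleson} and Chebyshev's inequality show that, outside a Carleson family of balls, $|\Delta_\mu(x,r)|\le\varepsilon$ for most $x$ and a long range of scales. Suppose there were a non-Carleson family of balls that are not $\varepsilon_0$-bilaterally flat. Rescaling such balls and taking weak limits gives an $n$-AD-regular limit measure $\nu$ with $\Delta_\nu(x,r)=0$ for all $x\in\supp\nu$ and all $r>0$, and $\nu$ is not flat on the unit ball. Here AD-regularity is what makes the limits nondegenerate and lets $\Delta$ pass to the limit off a null set of radii. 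It remains to classify such $\nu$.

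\emph{Classifying $\nu$.} The condition $\Delta_\nu\equiv0$ says that $r\mapsto\nu(B(x,r))/r^n$ is $2$-multiplicatively periodic. I would first average the identity over $r\in[1,2]$ against smooth weights to obtain smooth identities $\int\varphi_r(x-y)\,d\nu(y)=\text{const}(x)$. I would then exploit analyticity-type arguments with Gaussian kernels, as in Kowalski--Preiss and Preiss's theory of uniform measures, together with tangent measures at $\infty$ and at $0$. The goal is to show that the density $\Theta(x)$ is constant and that $\nu$ is $n$-uniform. Preiss's theorem then makes tangent measures flat, and a connectivity/compactness argument transfers flatness back to $\nu$. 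The steps I expect to be hardest are proving that the density is independent of $x$ and ruling out non-flat uniform cones such as the Kowalski--Preiss cone in codimension one. For the latter I would try to check directly that the periodic-density condition fails for the light cone, or invoke a finer BWGL variant in which only flatness at most scales is needed.
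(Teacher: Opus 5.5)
Your sufficiency argument has a genuine gap. The contradiction argument needs the classification ``if $\nu$ is $n$-AD-regular and $\Delta_\nu\equiv0$ on $\supp(\nu)\times(0,\infty)$, then $\nu$ is flat in the unit ball'', and this is false.

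The Kowalski--Preiss cone $\nu=\mathcal H^3|_C$, $C=\{x\in\R^4: x_4^2=x_1^2+x_2^2+x_3^2\}$, is $3$-uniform: $\nu(B(x,r))=c\,r^3$ for all $x\in C$ and $r>0$. So $\Delta_\nu$ vanishes identically, while $\nu$ is far from every $3$-plane in $B(0,1)$. This has several consequences for your plan:
\begin{itemize}
\item Your fallback of checking that the periodic-density condition fails for the light cone cannot succeed, since its density is exactly constant.
\item The step ``Preiss's theorem makes tangent measures flat and flatness transfers back to $\nu$'' fails on the same example. Its tangent measures are flat at every point except the vertex, yet $\nu$ is not flat.
\item No classification of $n$-uniform measures is available for general $n$ and codimension, so such examples cannot be ruled out one at a time.
\end{itemize}
The cone itself satisfies the BWGL: its non-flat balls are among the $B(x,r)$ with $|x|\le K(\varepsilon_0)\,r$, which form a Carleson family. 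So it does not contradict the theorem. What it shows is that going from ``blow-up limits have constant density'' to ``most balls are flat'' is a multiscale statement that a single blow-up cannot deliver.

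The missing idea is to aim the compactness argument at the \emph{weak constant density condition} instead of the BWGL; this is the route the paper attributes to \cite{CGLT}. Smallness of the square function on $B(x,Kr)\times(0,Kr)$, with $K$ large, forces $|\mu(B(y,t))-c\,t^n|\le\varepsilon r^n$ for $y\in B(x,r)\cap\supp(\mu)$ and $0<t\le r$, because blow-up limits are $n$-uniform. This conclusion is true for the Kowalski--Preiss cone, so no classification is needed. Chebyshev's inequality then makes the exceptional pairs $(x,r)$ a Carleson set. To conclude, use the theorem of David and Semmes, completed for general $n$ by Tolsa via big pieces of Lipschitz graphs for $n$-uniform measures, that the weak constant density condition implies uniform rectifiability. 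The cones are absorbed into that theorem. In \cite{CGLT} the steps are: Corollary 3.12 (passing from $\Delta_\mu$ to a smooth square function by convex combinations), the compactness Lemmas 3.1--3.6, and Theorem 2.3.

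The step you feared most, showing the limit is $n$-uniform, is the tractable one. The $2$-periodicity of $r\mapsto\nu(B(x,r))/r^n$, together with $B(x,2^kr)\subset B(y,2^kr+|x-y|)$ as $k\to\infty$, already gives $\nu(B(x,r))=\nu(B(y,r))$ for all $x,y\in\supp(\nu)$.

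The paper says nothing about how \cite{CGLT} prove necessity. Your sketch there is plausible in outline, with two corrections:
\begin{itemize}
\item The reduction must use the corona decomposition, since big pieces of Lipschitz graphs is strictly stronger than uniform rectifiability.
\item On a graph, $\mu(B(x,r))$ is even in $A$, so there is no linear term for Plancherel to handle. Everything rests on the quadratic terms, measured against the best affine approximation at each $(x,r)$.
\end{itemize}
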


In this paper we consider what happens when the density is composed with a monotone
function $F$ before taking the difference. The main example is $F=\log$. In this case
the resulting quantity depends only on the ratio $\mu(B(x,r))/\mu(B(x,2r))$ and not on
the normalization by $r^n$, which makes it a natural quantity to consider for doubling
measures which are not AD-regular; see for instance \cite{ADT} and \cite{To-memoir}.
This scale-invariance is also what allows the logarithmic square function to be used as
a regularizer on learned representations in \cite{Le-URJEPA}, where
$\Delta^{\log}_\mu$ depends only on the ratio $\mu(B(x,r))/\mu(B(x,2r))$.

\begin{definition}\label{def:DeltaF}
Let $F\colon(0,\infty)\to\R$ be a Borel function. For $x\in\supp(\mu)$ and $r>0$ set
\[
\Delta^F_\mu(x,r):=F\bigl(\Dm(x,r)\bigr)-F\bigl(\Dm(x,2r)\bigr).
\]
For $x\notin\supp(\mu)$ we set $\Delta^F_\mu(x,r):=0$; since
$\mu(\R^d\setminus\supp(\mu))=0$, this convention does not affect any of the integrals
below. Given a compact interval $J\subset(0,\infty)$ and $\lambda\ge 1$, we say $F$ is
\emph{bi-Lipschitz on $J$ with constant $\lambda$} if
\[
\lambda^{-1}|s-t|\;\le\;|F(s)-F(t)|\;\le\;\lambda|s-t|
\qquad\text{for all } s,t\in J.
\]
We do not assume that $F$ is monotone; see Remark~\ref{rem:scope}.
\end{definition}

Note that the convention $\Delta^F_\mu=0$ outside $\supp(\mu)$ is needed: for
$x\notin\supp(\mu)$ and $r$ small enough we have $\Dm(x,r)=0$, and $F$ (for instance
$F=\log$) need not be defined at $0$.

Our main results are the following. Throughout, $J_0:=[c_0^{-1},c_0]$ denotes the
interval to which \eqref{eq:ADR} confines the density at admissible scales.

\begin{theorem}\label{thm:main}
Let $\mu$ be an $n$-AD-regular measure in $\R^d$ with constant $c_0$ and
$\mu(\R^d)=\infty$. Let $F\colon(0,\infty)\to\R$ be a Borel function and
bi-Lipschitz on $J_0$ with constant $\lambda$. Then $\mu$ is uniformly $n$-rectifiable if and only if
there exists a constant $c$ such that, for any ball $B(x_0,R)$ centered at $\supp(\mu)$,
\begin{equation}\label{eq:carlesonF}
\int_0^R\!\!\int_{x\in B(x_0,R)} |\Delta^F_\mu(x,r)|^2\, d\mu(x)\,\frac{dr}{r}
\;\le\; c\,R^n.
\end{equation}
Quantitatively: if \eqref{eq:carleson} holds with constant $c'$ then
\eqref{eq:carlesonF} holds with constant $\lambda^2 c'$, and if \eqref{eq:carlesonF}
holds with constant $c$ then \eqref{eq:carleson} holds with constant $\lambda^2 c$.
\end{theorem}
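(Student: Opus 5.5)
The plan is to compare the two square functions pointwise and then invoke Theorem~\ref{thm:CGLT}. Since $\mu(\R^d)=\infty$, we have $\diam(\supp(\mu))=\infty$, so \eqref{eq:ADR} holds for every $x\in\supp(\mu)$ and every $r>0$. Hence for $x\in\supp(\mu)$ and all $r>0$ both $\Dm(x,r)$ and $\Dm(x,2r)$ lie in $J_0=[c_0^{-1},c_0]$. The bi-Lipschitz hypothesis on $J_0$, applied with $s=\Dm(x,r)$ and $t=\Dm(x,2r)$, then gives the pointwise two-sided estimate
\[
\lambda^{-1}|\Delta_\mu(x,r)|\le |\Delta^F_\mu(x,r)|\le \lambda|\Delta_\mu(x,r)|,
\qquad x\in\supp(\mu),\ r>0.
\]

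Next we square this inequality and integrate against $d\mu(x)\,dr/r$ over $B(x_0,R)\times(0,R)$. Points outside $\supp(\mu)$ form a $\mu$-null set, so the convention $\Delta^F_\mu=0$ there plays no role. Before integrating, we must check measurability. The map $(x,r)\mapsto\mu(B(x,r))$ is Borel: it is lower semicontinuous if the balls are open, and upper semicontinuous if they are closed. Since $F$ is Borel, the composition $\Delta^F_\mu$ is Borel on $\supp(\mu)\times(0,\infty)$, and the integrals make sense in $[0,\infty]$. Integrating the right-hand inequality shows that \eqref{eq:carleson} with constant $c'$ implies \eqref{eq:carlesonF} with constant $\lambda^2c'$. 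Integrating the left-hand one shows that \eqref{eq:carlesonF} with constant $c$ implies \eqref{eq:carleson} with constant $\lambda^2c$.

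Finally, Theorem~\ref{thm:CGLT} identifies \eqref{eq:carleson} with uniform $n$-rectifiability, which gives the stated equivalence. The only step needing care is the first: confirming that the density at both scales $r$ and $2r$ stays inside $J_0$ for every $r\in(0,R)$. This is exactly where the hypothesis $\mu(\R^d)=\infty$ is used, since otherwise $2r$ could exceed $\diam(\supp(\mu))$ and the density could leave $J_0$. That case is deferred to the finite-mass discussion.
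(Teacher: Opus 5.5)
Your proposal is correct and follows the paper's proof essentially step for step. The paper argues the same way: $\mu(\R^d)=\infty$ forces unbounded support, so both $\Dm(x,r)$ and $\Dm(x,2r)$ lie in $J_0$; the bi-Lipschitz hypothesis then gives pointwise two-sided comparability, and one squares, integrates and invokes Theorem~\ref{thm:CGLT}. Your explicit Borel-measurability check via semicontinuity of $(x,r)\mapsto\mu(B(x,r))$ is a small addition the paper only touches on in Remark~\ref{rem:scope}. The implication $\mu(\R^d)=\infty\Rightarrow\diam(\supp(\mu))=\infty$ deserves its one-line justification: a bounded support would be compact, and a Radon measure is finite on compact sets.
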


\begin{corollary}[logarithmic densities]\label{cor:log}
Let $F=\log$, so that
\[
\Delta^{\log}_\mu(x,r)
=\log\frac{\mu(B(x,r))}{\mu(B(x,2r))}+n\log 2,
\]
then Theorem~\ref{thm:main} holds with $\lambda=c_0$. Moreover the hypothesis
$\mu(\R^d)=\infty$ can be removed, with different constants; see
Proposition~\ref{prop:bounded}.
\end{corollary}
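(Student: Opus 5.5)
The proof has two parts. First, check the formula for $\Delta^{\log}_\mu$. Second, verify that $\log$ is bi-Lipschitz on $J_0=[c_0^{-1},c_0]$ with constant $\lambda=c_0$, so that Theorem~\ref{thm:main} applies directly. The removal of the hypothesis $\mu(\R^d)=\infty$ is deferred to Proposition~\ref{prop:bounded}, as the statement says.

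For the formula, let $x\in\supp(\mu)$ and $r>0$. Then $\mu(B(x,r))>0$, so both densities are positive and
\[
\log\Dm(x,r)-\log\Dm(x,2r)=\log\frac{\mu(B(x,r))}{\mu(B(x,2r))}-n\log r+n\log(2r),
\]
which equals $\log\bigl(\mu(B(x,r))/\mu(B(x,2r))\bigr)+n\log 2$. For $x\notin\supp(\mu)$ both sides are set to $0$ by the convention in Definition~\ref{def:DeltaF}.

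For the bi-Lipschitz bound, take $s,t\in J_0$. By the mean value theorem, $\log s-\log t=\xi^{-1}(s-t)$ for some $\xi$ between $s$ and $t$. Since $\xi\in[c_0^{-1},c_0]$, we have $c_0^{-1}\le \xi^{-1}\le c_0$. Hence
\[
c_0^{-1}|s-t|\le|\log s-\log t|\le c_0|s-t|,
\]
so $\lambda=c_0$ works. Also, $\log$ is continuous on $(0,\infty)$ and therefore Borel. Theorem~\ref{thm:main} then gives the equivalence, with constants $c_0^2c'$ and $c_0^2c$ respectively.

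There is no real obstacle here; the only point needing care is where Theorem~\ref{thm:main} actually uses $J_0$. When $\mu(\R^d)=\infty$ we have $\diam(\supp(\mu))=\infty$, so \eqref{eq:ADR} holds at every scale $r>0$. Thus both $\Dm(x,r)$ and $\Dm(x,2r)$ lie in $J_0$ for every $x\in\supp(\mu)$ and every $r$ in the Carleson integral. This is exactly what allows the pointwise comparison $|\Delta^{\log}_\mu|\le c_0|\Delta_\mu|$ and its reverse. In the bounded case this fails once $2r>\diam(\supp(\mu))$: the density there can tend to $0$, where $\log$ is not Lipschitz. That is why the statement refers to Proposition~\ref{prop:bounded} and allows different constants in that case.
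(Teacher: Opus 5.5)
Your proposal is correct and is essentially the paper's proof: the mean value theorem on $J_0$ gives $\lambda=c_0$, Theorem~\ref{thm:main} then applies with constants $c_0^2c'$ and $c_0^2c$, the identity follows from $\log\Dm(x,r)=\log\mu(B(x,r))-n\log r$, and the bounded case is deferred to Proposition~\ref{prop:bounded}. One small slip: for $x\notin\supp(\mu)$ the convention sets $\Delta^{\log}_\mu$ to $0$, not the right-hand side of the identity, which may be undefined there; this set is $\mu$-null, however, and the identity is only claimed on $\supp(\mu)$.
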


In \cite{CGLT} the density $\Dm(x,r)$ is also replaced by the convolution of $\mu$ with
a smooth radial kernel, and the same characterization is obtained. We recall the
setting. Let $\phi\colon\R^d\to\R$ be either of the form $\phi(x)=e^{-|x|^{2N}}$
with $N\in\N$, or $\phi(x)=(1+|x|^2)^{-a}$ with $a>n/2$. For $t>0$ let
\[
\phi_t(x):=\frac1{t^n}\,\phi\Bigl(\frac{x}{t}\Bigr),
\qquad
D_{\mu,\phi}(x,t):=\phi_t*\mu(x)=\int\phi_t(y-x)\,d\mu(y),
\]
so that $D_{\mu,\phi}(x,t)$ is a smooth version of $\Dm(x,t)$. Following
\cite{CGLT}, we set
\[
\Delta_{\mu,\phi}(x,t):=\int\bigl(\phi_t(y-x)-\phi_{2t}(y-x)\bigr)\,d\mu(y)
=D_{\mu,\phi}(x,t)-D_{\mu,\phi}(x,2t),
\]
and, with $\partial_\phi(x,t):=t\,\partial_t\phi_t(x)$,
\[
\widetilde\Delta_{\mu,\phi}(x,t):=\int\partial_\phi(y-x,t)\,d\mu(y)
=t\,\partial_t D_{\mu,\phi}(x,t).
\]
Note that $\phi_t(x)-\phi_{2t}(x)$ is a discrete approximation to
$\partial_\phi(x,t)$, and hence $\Delta_{\mu,\phi}(x,t)$ is a discrete
approximation to $\widetilde\Delta_{\mu,\phi}(x,t)$. By \cite[Theorem 1.2]{CGLT}, an $n$-AD-regular
measure $\mu$ is uniformly $n$-rectifiable if and only if the Carleson condition
\eqref{eq:carleson} holds with $\Delta_\mu$ replaced by $\Delta_{\mu,\phi}$, and if
and only if it holds with $\Delta_\mu$ replaced by $\widetilde\Delta_{\mu,\phi}$.
Given a Borel function $F\colon(0,\infty)\to\R$, we set, for $x\in\supp(\mu)$
and $t>0$,
\[
\Delta^F_{\mu,\phi}(x,t):=F\bigl(D_{\mu,\phi}(x,t)\bigr)-F\bigl(D_{\mu,\phi}(x,2t)\bigr),
\qquad
\widetilde\Delta^F_{\mu,\phi}(x,t):=t\,\partial_t\,F\bigl(D_{\mu,\phi}(x,t)\bigr),
\]
the latter when $F\in C^1(0,\infty)$, and $\Delta^F_{\mu,\phi}=\widetilde\Delta^F_{\mu,\phi}=0$
for $x\notin\supp(\mu)$. For $F=\log$ one has
$\widetilde\Delta^{\log}_{\mu,\phi}=\widetilde\Delta_{\mu,\phi}/D_{\mu,\phi}$,
the logarithmic derivative of $t\mapsto\phi_t*\mu(x)$. In this setting the interval
$J_0$ is replaced by
\begin{equation}\label{eq:Jphi}
J_\phi:=\bigl[\phi(1)\,c_0^{-1},\;C_\phi\,c_0\bigr],
\qquad
C_\phi:=\phi(0)+\sum_{k\ge1}2^{kn}\phi(2^{k-1}),
\end{equation}
where we write $\phi(\rho)$ for the common value of $\phi$ on $\{|x|=\rho\}$.
Note that $C_\phi<\infty$ for both classes of kernels, and that for
$\phi(x)=(1+|x|^2)^{-a}$ this is exactly where $a>n/2$ is used.

\begin{theorem}\label{thm:smooth}
Let $\mu$ be an $n$-AD-regular measure in $\R^d$ with constant $c_0$ and
$\mu(\R^d)=\infty$, let $\phi$ be as above, and let $F\colon(0,\infty)\to\R$ be
a Borel function and bi-Lipschitz on $J_\phi$ with constant $\lambda_\phi$. The
following are equivalent:
\begin{itemize}
\item[(a)] $\mu$ is uniformly $n$-rectifiable.
\item[(b)] There exists a constant $c$ such that for any ball $B(x_0,R)$ centered at
$\supp(\mu)$,
\begin{equation}\label{eq:carlesonFphi}
\int_0^R\!\!\int_{x\in B(x_0,R)}|\Delta^F_{\mu,\phi}(x,t)|^2\,d\mu(x)\,\frac{dt}{t}
\;\le\;c\,R^n.
\end{equation}
\end{itemize}
If in addition $F\in C^1(0,\infty)$, then (a) and (b) are also equivalent to (c).
\begin{itemize}
\item[(c)] There exists a constant $c$ such that for any ball $B(x_0,R)$ centered at
$\supp(\mu)$,
\begin{equation}\label{eq:carlesonFphitilde}
\int_0^R\!\!\int_{x\in B(x_0,R)}|\widetilde\Delta^F_{\mu,\phi}(x,t)|^2\,d\mu(x)\,\frac{dt}{t}
\;\le\;c\,R^n.
\end{equation}
\end{itemize}
The Carleson constants in (b) and (c) differ from those of the corresponding conditions
for $\Delta_{\mu,\phi}$ and $\widetilde\Delta_{\mu,\phi}$ by at most a factor
$\lambda_\phi^2$. For $F=\log$ one can take $\lambda_\phi=\max\bigl(C_\phi
c_0,\,c_0/\phi(1)\bigr)$.
\end{theorem}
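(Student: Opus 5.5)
The plan is to reduce everything to \cite[Theorem 1.2]{CGLT} by a pointwise comparison, exactly as for Theorem~\ref{thm:main}. The key step is to show that for every $x\in\supp(\mu)$ and every $t>0$,
\[
D_{\mu,\phi}(x,t)\in J_\phi .
\]
Note that here no restriction $t\le\diam(\supp(\mu))$ is needed, because $\mu(\R^d)=\infty$ makes $\supp(\mu)$ unbounded. Once this is known, the bi-Lipschitz property of $F$ on $J_\phi$ gives
\[
\lambda_\phi^{-1}|\Delta_{\mu,\phi}(x,t)|\le|\Delta^F_{\mu,\phi}(x,t)|\le\lambda_\phi|\Delta_{\mu,\phi}(x,t)|
\]
pointwise on $\supp(\mu)\times(0,\infty)$. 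Both quantities vanish off $\supp(\mu)$, or are irrelevant there since that set is $\mu$-null. Integrating over $B(x_0,R)\times(0,R)$ transfers the Carleson condition in either direction with constant changed by at most $\lambda_\phi^2$. Then (a)$\Leftrightarrow$(b) follows from the CGLT equivalence for $\Delta_{\mu,\phi}$.

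For the inclusion in $J_\phi$:
\begin{itemize}
\item \emph{Lower bound.} Use radial monotonicity of $\phi$, which holds for both kernel classes: $\phi\ge\phi(1)$ on $B(0,1)$. Hence
\[
\phi_t*\mu(x)\ge t^{-n}\phi(1)\,\mu(B(x,t))\ge\phi(1)c_0^{-1}.
\]
\item \emph{Upper bound.} Decompose $\R^d$ into $B(x,t)$ and the annuli $B(x,2^kt)\setminus B(x,2^{k-1}t)$. On the $k$-th annulus $\phi_t(y-x)\le t^{-n}\phi(2^{k-1})$, and its measure is at most $c_0(2^kt)^n$. Summing gives
\[
D_{\mu,\phi}(x,t)\le c_0\Bigl(\phi(0)+\sum_{k\ge1}2^{kn}\phi(2^{k-1})\Bigr)=C_\phi c_0 .
\]
\end{itemize}

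For (c), take $F\in C^1$. By the chain rule,
\[
\widetilde\Delta^F_{\mu,\phi}(x,t)=F'\bigl(D_{\mu,\phi}(x,t)\bigr)\,\widetilde\Delta_{\mu,\phi}(x,t),
\]
where differentiating under the integral is justified by the decay of $\partial_\phi$. Since $F$ is $C^1$ and bi-Lipschitz on $J_\phi$ with constant $\lambda_\phi$, we have $\lambda_\phi^{-1}\le|F'(s)|\le\lambda_\phi$ for $s$ in the interior of $J_\phi$, and by continuity on all of $J_\phi$. Indeed, difference quotients are bounded above and below by $\lambda_\phi^{\pm1}$ and converge to $F'(s)$; at the endpoints use one-sided quotients, or continuity of $F'$. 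This gives the same pointwise two-sided comparison between $\widetilde\Delta^F_{\mu,\phi}$ and $\widetilde\Delta_{\mu,\phi}$. Then (a)$\Leftrightarrow$(c) follows from \cite[Theorem 1.2]{CGLT}.

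Finally, for $F=\log$ on $[a,b]=J_\phi$ the mean value theorem gives the constants. The upper Lipschitz constant is $1/a=c_0/\phi(1)$. The lower one is $1/b$, so the bi-Lipschitz constant is $\max(b,1/a)=\max(C_\phi c_0,c_0/\phi(1))$.

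The only real obstacle is the upper bound for $D_{\mu,\phi}$. One must use AD-regularity at all radii $2^kt$, which is why $\mu(\R^d)=\infty$ is assumed, and one must check that $\phi$ is radially nonincreasing so that the annular bound with $\phi(2^{k-1})$ is valid. Both kernel families satisfy this, and the $a>n/2$ hypothesis enters only through $C_\phi<\infty$.
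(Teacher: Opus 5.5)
Your proposal is correct and follows the paper's proof essentially step for step. You prove the confinement $D_{\mu,\phi}(x,t)\in J_\phi$ using radial monotonicity for the lower bound and dyadic annuli with AD-regularity at all radii $2^kt$ for the upper bound; you then apply the pointwise bi-Lipschitz comparison, together with the chain rule $\widetilde\Delta^F_{\mu,\phi}=F'(D_{\mu,\phi})\,\widetilde\Delta_{\mu,\phi}$ for (c), integrate, invoke the smooth-kernel characterization of CGLT, and obtain the constant $\max(C_\phi c_0,\,c_0/\phi(1))$ for $F=\log$ by the mean value theorem, all exactly as the paper does.
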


The proof of Theorem~\ref{thm:main} is simple. By AD-regularity the density
$\Dm(x,r)$ stays in the compact interval $J_0$, on which $F$ is bi-Lipschitz, so that
$|\Delta^F_\mu|$ and $|\Delta_\mu|$ are comparable pointwise and the two Carleson
conditions are equivalent. In other words, Theorem~\ref{thm:main} is a change of
variables in Theorem~\ref{thm:CGLT} and not a new characterization. We write down the
statement and the proof here for two reasons: for $F=\log$ the quantity
$\Delta^{\log}_\mu$ is the natural one once AD-regularity is dropped, and the examples
in Section~\ref{sec:degenerate} show what can and cannot be obtained by this argument
when $F$ is not bi-Lipschitz.

The paper is organized as follows. In Section~\ref{sec:proof} we prove
Theorem~\ref{thm:main} and Corollary~\ref{cor:log}. In Section~\ref{sec:smooth} we
prove Theorem~\ref{thm:smooth}, and show that for $F$ with bounded derivatives up
to order $k$ the characterization also holds with the higher order square functions
$t^k\partial_t^kF(\phi_t*\mu)$ of \cite[Proposition 1.3]{CGLT}
(Proposition~\ref{prop:higher}). In Section~\ref{sec:bounded} we
treat the case $\mu(\R^d)<\infty$, where the large scales have to be handled separately
and the two implications require different hypotheses on $F$. In
Section~\ref{sec:degenerate} we discuss continuous strictly monotone $F$ which are not
bi-Lipschitz on $J_0$. In Section~\ref{sec:qualitative} we leave the AD-regular class
and show that the qualitative characterization of $n$-rectifiable measures of Tolsa and
Toro \cite{TT} also holds after composition with a locally bi-Lipschitz $F$.
The proofs of Propositions~\ref{prop:higher} and~\ref{prop:bounded} are given in
Appendices~\ref{app:higher} and~\ref{app:bounded}, and Appendix~\ref{app:numerics}
discusses the advantages of $\Delta^{\log}_\mu$ over $\Delta_\mu$ in numerical
computation.

Throughout the paper the letter $C$ stands for some constant which may change its
value at different occurrences. The notation $A\lesssim B$ means that there is some
fixed constant $C$ such that $A\le CB$, with $C$ as above. Also, $A\approx B$ is
equivalent to $A\lesssim B\lesssim A$.

\section{Proof of the main theorem}\label{sec:proof}

We start with two elementary lemmas.

\begin{lemma}\label{lem:confinement}
Let $\mu$ be $n$-AD-regular with constant $c_0$ and $\mu(\R^d)=\infty$, and let
$J_0:=[c_0^{-1},c_0]$. Then for all $x\in\supp(\mu)$ and all $r>0$,
\[
\Dm(x,r)\in J_0 \quad\text{and}\quad \Dm(x,2r)\in J_0.
\]
\end{lemma}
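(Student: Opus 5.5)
The plan is to reduce the lemma to the defining inequality \eqref{eq:ADR}. The only thing to check is that every radius in question is admissible, that is, $r\le\diam(\supp(\mu))$ and $2r\le\diam(\supp(\mu))$.

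First I will show that $\mu(\R^d)=\infty$ forces $\diam(\supp(\mu))=\infty$. Suppose instead that $\supp(\mu)$ were bounded, say contained in a closed ball $\bar B(x,\rho)$ with $x\in\supp(\mu)$. Since $\mu$ is Radon, it is finite on compact sets. Hence $\mu(\R^d)=\mu(\supp(\mu))\le\mu(\bar B(x,\rho))<\infty$, which is a contradiction. (The paper already remarks this in the introduction, so it may simply be cited.)

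With $\diam(\supp(\mu))=\infty$, the constraint $0<r\le\diam(\supp(\mu))$ in \eqref{eq:ADR} is vacuous. Therefore, for every $x\in\supp(\mu)$ and every $r>0$, we have $c_0^{-1}r^n\le\mu(B(x,r))\le c_0r^n$. Dividing by $r^n$ gives $\Dm(x,r)\in J_0$. Applying the same bound with $2r$ in place of $r$, which is again an arbitrary positive radius, gives $\Dm(x,2r)\in J_0$.

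There is no real obstacle. The only point that needs care is the admissibility of the radius $2r$, and that is exactly where the hypothesis $\mu(\R^d)=\infty$ enters. Without it, $\Dm(x,2r)$ could fall below $c_0^{-1}$ once $2r$ exceeds the diameter of the support, which is why the finite-mass case is deferred to Proposition~\ref{prop:bounded}.
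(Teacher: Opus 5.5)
Your proposal is correct and follows the paper's proof exactly: it uses the Radon property (finiteness on compact sets, together with $\mu(\R^d\setminus\supp(\mu))=0$) to deduce $\diam(\supp(\mu))=\infty$ from $\mu(\R^d)=\infty$. Then \eqref{eq:ADR} applies at both radii $r$ and $2r$.
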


\begin{proof}
If $\supp(\mu)$ were bounded, then it would be compact, and since $\mu$ is a Radon
measure and $\mu(\R^d\setminus\supp(\mu))=0$, we would have
$\mu(\R^d)=\mu(\supp(\mu))<\infty$, a contradiction. Hence
$\diam(\supp(\mu))=\infty$, so \eqref{eq:ADR} applies at every radius $r>0$.
Applying it at the radii $r$ and $2r$ gives the claim.
\end{proof}

\begin{lemma}[pointwise comparability]\label{lem:pointwise}
Under the hypotheses of Theorem~\ref{thm:main}, for all $x\in\supp(\mu)$ and $r>0$,
\begin{equation}\label{eq:comparable}
\lambda^{-1}\,|\Delta_\mu(x,r)|
\;\le\;|\Delta^F_\mu(x,r)|
\;\le\;\lambda\,|\Delta_\mu(x,r)|.
\end{equation}
\end{lemma}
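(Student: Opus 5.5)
The plan is to combine Lemma~\ref{lem:confinement} with the definition of bi-Lipschitz on $J_0$, applied to the two density values at radii $r$ and $2r$.

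Fix $x\in\supp(\mu)$ and $r>0$. Set $s:=\Dm(x,r)$ and $t:=\Dm(x,2r)$. Since $\mu(\R^d)=\infty$ by the hypotheses of Theorem~\ref{thm:main}, Lemma~\ref{lem:confinement} gives $s,t\in J_0$. By definition,
\[
\Delta_\mu(x,r)=s-t,\qquad \Delta^F_\mu(x,r)=F(s)-F(t).
\]
Because $F$ is bi-Lipschitz on $J_0$ with constant $\lambda$, the two-sided inequality
\[
\lambda^{-1}|s-t|\le|F(s)-F(t)|\le\lambda|s-t|
\]
holds for this pair. This is exactly \eqref{eq:comparable}.

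No step is a real obstacle. The only point to check is that both densities lie in $J_0$ at every radius $r>0$, not just for $r\le\diam(\supp(\mu))$. That is precisely where $\mu(\R^d)=\infty$ enters, through Lemma~\ref{lem:confinement}, which also covers the radius $2r$. I would add a remark that $x\in\supp(\mu)$ is needed so that \eqref{eq:ADR} applies. For $x\notin\supp(\mu)$ both sides of the comparison are not even defined in the same way, and the convention $\Delta^F_\mu=0$ is used there instead.
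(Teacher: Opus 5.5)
Your proposal is correct and follows the paper's proof essentially verbatim: set $s=\Dm(x,r)$ and $t=\Dm(x,2r)$, use Lemma~\ref{lem:confinement} (where $\mu(\R^d)=\infty$ enters) to place both in $J_0$, and apply the bi-Lipschitz inequality to $s-t=\Delta_\mu(x,r)$ and $F(s)-F(t)=\Delta^F_\mu(x,r)$. The only detail the paper makes explicit and you leave implicit is that $s,t\in J_0\subset(0,\infty)$, so $F(s)$ and $F(t)$ are defined.
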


\begin{proof}
Fix $x\in\supp(\mu)$ and $r>0$, and write
\[
s:=\Dm(x,r)=\frac{\mu(B(x,r))}{r^n},
\qquad
t:=\Dm(x,2r)=\frac{\mu(B(x,2r))}{(2r)^n}.
\]
With this notation, by Definition~\ref{def:DeltaF} and the definition of $\Delta_\mu$,
\[
\Delta_\mu(x,r)=s-t,
\qquad
\Delta^F_\mu(x,r)=F(s)-F(t).
\]
Since $x\in\supp(\mu)$ and $\mu(\R^d)=\infty$, Lemma~\ref{lem:confinement} applies and
gives $s\in J_0$ and $t\in J_0$. In particular $s,t>0$, so $F(s)$ and $F(t)$ are
defined, and both points lie in the interval on which $F$ is assumed bi-Lipschitz.

By hypothesis $F$ is bi-Lipschitz on $J_0$ with constant $\lambda$, that is,
\[
\lambda^{-1}|s'-t'|\;\le\;|F(s')-F(t')|\;\le\;\lambda|s'-t'|
\qquad\text{for all } s',t'\in J_0.
\]
Taking $s'=s$ and $t'=t$, since $s,t\in J_0$, we get
\[
\lambda^{-1}|s-t|\;\le\;|F(s)-F(t)|\;\le\;\lambda|s-t|.
\]
Substituting $s-t=\Delta_\mu(x,r)$ and $F(s)-F(t)=\Delta^F_\mu(x,r)$, we obtain
\eqref{eq:comparable}.

Note that the lemma is purely pointwise: no integration or cancellation between scales is involved, and the
only property of $\mu$ that is used is that the two densities lie in $J_0$. If $s=t$
both sides of \eqref{eq:comparable} vanish, and if $s\ne t$ the inequality says that
the difference quotient $\bigl(F(s)-F(t)\bigr)/(s-t)$ lies in $[\lambda^{-1},\lambda]$
in absolute value.
\end{proof}

\begin{proof}[Proof of Theorem~\ref{thm:main}]
Let $B(x_0,R)$ be a ball with $x_0\in\supp(\mu)$. The measure $\mu$ assigns no mass to
$\R^d\setminus\supp(\mu)$, so in both \eqref{eq:carleson} and \eqref{eq:carlesonF} the
inner integral effectively ranges over $x\in B(x_0,R)\cap\supp(\mu)$, where
\eqref{eq:comparable} holds for every $r\in(0,R)$. Squaring \eqref{eq:comparable} and
integrating against $d\mu(x)\,\frac{dr}{r}$ over $B(x_0,R)\times(0,R)$ yields
\[
\lambda^{-2}\int_0^R\!\!\int_{B(x_0,R)}|\Delta_\mu|^2\,d\mu\,\frac{dr}{r}
\;\le\;
\int_0^R\!\!\int_{B(x_0,R)}|\Delta^F_\mu|^2\,d\mu\,\frac{dr}{r}
\;\le\;
\lambda^{2}\int_0^R\!\!\int_{B(x_0,R)}|\Delta_\mu|^2\,d\mu\,\frac{dr}{r}.
\]
This gives the two quantitative implications in the statement, and in particular
\eqref{eq:carleson} and \eqref{eq:carlesonF} are equivalent, with constants that differ
by at most a factor $\lambda^2$.

If $\mu$ is uniformly $n$-rectifiable, Theorem~\ref{thm:CGLT} gives \eqref{eq:carleson},
hence \eqref{eq:carlesonF}. Conversely, if \eqref{eq:carlesonF} holds, then
\eqref{eq:carleson} holds, and Theorem~\ref{thm:CGLT}, specifically Corollary~3.12
together with Theorem~3.11 of \cite{CGLT}, gives uniform $n$-rectifiability.
\end{proof}

\begin{proof}[Proof of Corollary~\ref{cor:log}, unbounded case]
The function $\log$ is $C^1$ on $(0,\infty)$ with derivative $1/t$. For $s,t\in
J_0=[c_0^{-1},c_0]$ the mean value theorem gives $\log s-\log t=(s-t)/\xi$ with $\xi$
between $s$ and $t$, hence $\xi\in J_0$ and $1/\xi\in[c_0^{-1},c_0]$. Thus $\log$ is
bi-Lipschitz on $J_0$ with $\lambda=c_0$, and Theorem~\ref{thm:main} applies. The
displayed identity follows from
$\log\Dm(x,r)=\log\mu(B(x,r))-n\log r$.
\end{proof}

\begin{remark}\label{rem:onesided}
The two implications in Theorem~\ref{thm:main} use the two halves of the bi-Lipschitz
condition separately. The proof of ``\eqref{eq:carleson} implies \eqref{eq:carlesonF}''
only uses the upper inequality $|F(s)-F(t)|\le\lambda|s-t|$ on $J_0$, that is, that
$F$ is Lipschitz on $J_0$, and the proof of ``\eqref{eq:carlesonF} implies
\eqref{eq:carleson}'' only uses the lower inequality
$|F(s)-F(t)|\ge\lambda^{-1}|s-t|$ on $J_0$, that is, that $F$ is injective on $J_0$
with $F^{-1}$ Lipschitz on $F(J_0)$. Hence, if $\mu$ is uniformly $n$-rectifiable and
$F$ is Lipschitz on $J_0$ then \eqref{eq:carlesonF} holds, and if \eqref{eq:carlesonF}
holds and $F^{-1}$ is Lipschitz on $F(J_0)$ then $\mu$ is uniformly $n$-rectifiable,
in both cases with constant $\lambda^2$. This is relevant in
Section~\ref{sec:degenerate}.
\end{remark}

\begin{remark}\label{rem:scope}
No monotonicity of $F$ is assumed in Theorem~\ref{thm:main}, and none is needed in
the proof, since only $|\Delta^F_\mu|$ enters \eqref{eq:carlesonF}. In fact
monotonicity is a consequence of the hypothesis: a bi-Lipschitz function on an interval
is continuous and injective, hence strictly monotone on $J_0$. Since $|\Delta^F_\mu|$
does not change when $F$ is replaced by $-F$, whether $F$ is increasing or decreasing
on $J_0$ is irrelevant. The Borel measurability of $F$ is needed only so that
$\Delta^F_\mu$ is a measurable function of $(x,r)$; it is automatic when $F$ is
continuous on $(0,\infty)$, and for the theorem itself only the values of $F$ on $J_0$
matter.

For $F\in C^1(0,\infty)$ the bi-Lipschitz condition on $J_0$ is equivalent to
\[
\lambda^{-1}\;\le\;|F'(t)|\;\le\;\lambda\qquad\text{for all } t\in J_0.
\]
Indeed, if $F$ is bi-Lipschitz on $J_0$ with constant $\lambda$, letting $s\to t$ in
the definition gives $\lambda^{-1}\le|F'(t)|\le\lambda$. Conversely, if
$\lambda^{-1}\le|F'|\le\lambda$ on $J_0$, then the mean value theorem gives
$|F(s)-F(t)|=|F'(\xi)|\,|s-t|$ for some $\xi$ between $s$ and $t$, which is the
bi-Lipschitz inequality. In other words, the only constraint on $F'$ is that it stays
away from $0$ and from $\infty$ on $J_0$; the condition $F'>0$ can always be arranged
by replacing $F$ with $-F$. In particular every $F\in C^1(0,\infty)$ with $F'\ne0$ on
$(0,\infty)$ satisfies the hypothesis, since $|F'|$ is then bounded above and below by
positive constants on the compact interval $J_0$. Examples are $\log t$, $t^p$ for
$p\ne 0$, $\arctan t$, and $e^{\pm t}$. The hypothesis fails when $F$ or $F^{-1}$ is
not Lipschitz on $J_0$, that is, when $F$ has a critical point or a point where it is
not Lipschitz inside $J_0$. This case is discussed in Section~\ref{sec:degenerate}.
\end{remark}

The quantity $\Delta^{\log}_\mu$ is also better suited than $\Delta_\mu$ for
numerical computation; we discuss this, and the reason why the regularizer in
UR--JEPA \cite{Le-URJEPA} is built from $\Delta^{\log}_\mu$, in
Appendix~\ref{app:numerics}.

\section{Smooth kernels}\label{sec:smooth}

In this section we prove Theorem~\ref{thm:smooth}. The argument is the same as in
Section~\ref{sec:proof}, once we know that $D_{\mu,\phi}(x,t)$ stays in the compact
interval $J_\phi$. Throughout this section $\phi$ is one of the two kernels of
Theorem~\ref{thm:smooth}. Note that in both cases $\phi$ is positive, radial, and
nonincreasing in $|x|$.

\begin{lemma}\label{lem:confinement-smooth}
Let $\mu$ be $n$-AD-regular with constant $c_0$ and $\mu(\R^d)=\infty$. Then for all
$x\in\supp(\mu)$ and all $t>0$,
\[
D_{\mu,\phi}(x,t)\in J_\phi,
\]
where $J_\phi$ is the interval in \eqref{eq:Jphi}.
\end{lemma}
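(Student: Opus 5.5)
We bound $D_{\mu,\phi}(x,t)=t^{-n}\int\phi\bigl((y-x)/t\bigr)\,d\mu(y)$ from below and from above. Throughout, we use that $\phi$ is positive, radial and nonincreasing in $|x|$, and that by Lemma~\ref{lem:confinement} the estimate \eqref{eq:ADR} holds at every radius $r>0$, since $\diam(\supp(\mu))=\infty$. Fix $x\in\supp(\mu)$ and $t>0$.

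\emph{Lower bound.} Since $\phi\ge 0$, we may restrict the integral to the ball $B(x,t)$. There $|y-x|/t\le 1$, so monotonicity gives $\phi\bigl((y-x)/t\bigr)\ge\phi(1)$. Hence
\[
D_{\mu,\phi}(x,t)\ge t^{-n}\phi(1)\,\mu(B(x,t))\ge\phi(1)\,c_0^{-1}.
\]
For balls we use the closed-ball convention for $B(x,r)$. If the balls are open, the boundary $|y-x|=t$ is excluded, and the bound still holds, since we only need $\phi\ge\phi(1)$ on the ball.

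\emph{Upper bound.} Decompose $\R^d$ into the ball $A_0:=B(x,t)$ and the annuli $A_k:=B(x,2^kt)\setminus B(x,2^{k-1}t)$ for $k\ge1$. On $A_0$ we have $\phi\le\phi(0)$. On $A_k$ we have $|y-x|\ge 2^{k-1}t$, so $\phi\bigl((y-x)/t\bigr)\le\phi(2^{k-1})$. Therefore
\[
D_{\mu,\phi}(x,t)\le t^{-n}\Bigl(\phi(0)\,\mu(B(x,t))+\sum_{k\ge1}\phi(2^{k-1})\,\mu(B(x,2^kt))\Bigr).
\]
Applying \eqref{eq:ADR} in the form $\mu(B(x,2^kt))\le c_0 2^{kn}t^n$ bounds the right-hand side by $c_0\bigl(\phi(0)+\sum_{k\ge1}2^{kn}\phi(2^{k-1})\bigr)=C_\phi c_0$.

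\emph{Finiteness of $C_\phi$.} For the Gaussian-type kernel $e^{-\rho^{2N}}$ the series converges trivially. For $\phi(\rho)=(1+\rho^2)^{-a}$ we have $2^{kn}\phi(2^{k-1})\approx 2^{k(n-2a)}$, which is summable precisely because $a>n/2$. This also shows that $D_{\mu,\phi}(x,t)$ is finite.

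The only point needing care is the upper bound: we must check that the annular decomposition uses the right radii, namely that the lower radius $2^{k-1}t$ of $A_k$ is what enters $\phi$, while the AD-regularity bound is applied at the outer radius $2^kt$. This matching is what produces exactly the constant $C_\phi$ in \eqref{eq:Jphi}. Everything else is routine.
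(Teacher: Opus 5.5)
Your proof is correct and follows the paper's argument essentially step by step. You get the lower bound by restricting to $B(x,t)$, where $\phi\ge\phi(1)$. You get the upper bound from the dyadic annuli $A_k$, with $\phi$ evaluated at the inner radius $2^{k-1}t$, \eqref{eq:ADR} applied at the outer radius $2^kt$, and $C_\phi<\infty$ from $a>n/2$.
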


\begin{proof}
As in the proof of Lemma~\ref{lem:confinement}, $\mu(\R^d)=\infty$ implies
$\diam(\supp(\mu))=\infty$, so that \eqref{eq:ADR} holds for all $r>0$.

For the lower bound, since $\phi$ is nonincreasing in $|x|$ we have
$\phi_t(y-x)\ge t^{-n}\phi(1)$ for $|y-x|<t$, and hence
\[
D_{\mu,\phi}(x,t)\;\ge\;\frac{\phi(1)}{t^n}\,\mu(B(x,t))\;\ge\;\phi(1)\,c_0^{-1}.
\]
For the upper bound we split $\R^d$ into $B(x,t)$ and the annuli
$A_k:=B(x,2^kt)\setminus B(x,2^{k-1}t)$, $k\ge1$. On $B(x,t)$ we have
$\phi_t(y-x)\le t^{-n}\phi(0)$, and on $A_k$ we have
$\phi_t(y-x)\le t^{-n}\phi(2^{k-1})$. Therefore, using \eqref{eq:ADR} at the radii
$2^kt$,
\[
D_{\mu,\phi}(x,t)
\;\le\;\frac{\phi(0)\,\mu(B(x,t))}{t^n}
+\sum_{k\ge1}\frac{\phi(2^{k-1})\,\mu(B(x,2^kt))}{t^n}
\;\le\;c_0\Bigl(\phi(0)+\sum_{k\ge1}2^{kn}\phi(2^{k-1})\Bigr)=C_\phi\,c_0.
\]
It remains to check that $C_\phi<\infty$. For $\phi(x)=e^{-|x|^{2N}}$ the terms
$2^{kn}e^{-2^{2N(k-1)}}$ decay faster than any geometric sequence. For
$\phi(x)=(1+|x|^2)^{-a}$ we have $\phi(2^{k-1})\le 2^{-2a(k-1)}$, so that
$2^{kn}\phi(2^{k-1})\le 2^{2a}\,2^{-(2a-n)k}$, which is summable since $a>n/2$.
\end{proof}

\begin{lemma}\label{lem:pointwise-smooth}
Under the hypotheses of Theorem~\ref{thm:smooth}, for all $x\in\supp(\mu)$ and $t>0$,
\begin{equation}\label{eq:comparable-smooth}
\lambda_\phi^{-1}\,|\Delta_{\mu,\phi}(x,t)|
\;\le\;|\Delta^F_{\mu,\phi}(x,t)|
\;\le\;\lambda_\phi\,|\Delta_{\mu,\phi}(x,t)|.
\end{equation}
If moreover $F\in C^1(0,\infty)$, then
\begin{equation}\label{eq:comparable-smooth-tilde}
\lambda_\phi^{-1}\,|\widetilde\Delta_{\mu,\phi}(x,t)|
\;\le\;|\widetilde\Delta^F_{\mu,\phi}(x,t)|
\;\le\;\lambda_\phi\,|\widetilde\Delta_{\mu,\phi}(x,t)|.
\end{equation}
\end{lemma}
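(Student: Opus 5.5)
The plan follows the proof of Lemma~\ref{lem:pointwise}, with Lemma~\ref{lem:confinement-smooth} in place of Lemma~\ref{lem:confinement}. Fix $x\in\supp(\mu)$ and $t>0$, and set $s:=D_{\mu,\phi}(x,t)$ and $s':=D_{\mu,\phi}(x,2t)$. Applying Lemma~\ref{lem:confinement-smooth} at the scales $t$ and $2t$ gives $s,s'\in J_\phi$. Hence $F(s)$ and $F(s')$ are defined, and the bi-Lipschitz inequality on $J_\phi$ with constant $\lambda_\phi$ applies to the pair $(s,s')$. Since $\Delta_{\mu,\phi}(x,t)=s-s'$ and $\Delta^F_{\mu,\phi}(x,t)=F(s)-F(s')$, this gives \eqref{eq:comparable-smooth} directly.

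For \eqref{eq:comparable-smooth-tilde}, assume $F\in C^1(0,\infty)$. By Remark~\ref{rem:scope}, applied with $J_\phi$ in place of $J_0$ (the argument there only uses that the interval is compact), we get $\lambda_\phi^{-1}\le|F'(u)|\le\lambda_\phi$ for all $u\in J_\phi$. I would then use the chain rule:
\[
\widetilde\Delta^F_{\mu,\phi}(x,t)=t\,\partial_t F\bigl(D_{\mu,\phi}(x,t)\bigr)=F'\bigl(D_{\mu,\phi}(x,t)\bigr)\,\widetilde\Delta_{\mu,\phi}(x,t).
\]
Since $D_{\mu,\phi}(x,t)\in J_\phi$, taking absolute values gives the two-sided bound.

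The only point that needs care is justifying that $t\mapsto D_{\mu,\phi}(x,t)$ is differentiable, with derivative obtained by differentiating under the integral. This is what makes $\widetilde\Delta_{\mu,\phi}(x,t)=t\,\partial_tD_{\mu,\phi}(x,t)$ hold, so that the chain rule applies. I would justify it by dominated convergence, as follows.
\begin{itemize}
\item For $t$ in a compact subinterval $[t_1,t_2]\subset(0,\infty)$, the derivative $\partial_t\phi_t(z)=-nt^{-1}\phi_t(z)-t^{-n-1}\,\nabla\phi(z/t)\cdot z/t$ is bounded by a constant times a radial majorant.
\item For the Gaussian-type kernel this majorant decays faster than any power. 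For $(1+|z|^2)^{-a}$ it is $\lesssim(1+|z|/t_2)^{-2a}$.
\item The annular decomposition from the proof of Lemma~\ref{lem:confinement-smooth}, together with $a>n/2$, shows that the majorant is $\mu$-integrable, uniformly for $t\in[t_1,t_2]$.
\end{itemize}
The paper already uses the identity $\widetilde\Delta_{\mu,\phi}=t\,\partial_tD_{\mu,\phi}$ in its definitions, following \cite{CGLT}, so this step can also simply be cited.
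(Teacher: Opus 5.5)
Your proposal is correct and follows the paper's proof: Lemma~\ref{lem:confinement-smooth} puts $s,s'$ in $J_\phi$, the bi-Lipschitz inequality then gives the first estimate, and the chain rule $\widetilde\Delta^F_{\mu,\phi}=F'(D_{\mu,\phi})\,\widetilde\Delta_{\mu,\phi}$ together with $\lambda_\phi^{-1}\le|F'|\le\lambda_\phi$ on $J_\phi$ gives the second. Your dominated-convergence paragraph spells out the paper's one-line justification for differentiating under the integral. One small correction: deriving the bound on $|F'|$ from the bi-Lipschitz inequality uses that $J_\phi$ is a nondegenerate interval, not that it is compact.
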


\begin{proof}
Write $s=D_{\mu,\phi}(x,t)$ and $s'=D_{\mu,\phi}(x,2t)$. By
Lemma~\ref{lem:confinement-smooth}, $s,s'\in J_\phi$, and
\eqref{eq:comparable-smooth} is the bi-Lipschitz property of $F$ on $J_\phi$ applied
to $s$ and $s'$, exactly as in Lemma~\ref{lem:pointwise}.

For \eqref{eq:comparable-smooth-tilde}, note first that $t\mapsto D_{\mu,\phi}(x,t)$
is $C^1$ on $(0,\infty)$, since $\phi_t$ is smooth in $t$ and the differentiation
under the integral sign is justified by the decay of $\phi$ and of $\partial_t\phi_t$
together with \eqref{eq:ADR}; this is used in \cite{CGLT} to define
$\widetilde\Delta_{\mu,\phi}$. By the chain rule,
\[
\widetilde\Delta^F_{\mu,\phi}(x,t)
=t\,\partial_t F\bigl(D_{\mu,\phi}(x,t)\bigr)
=F'\bigl(D_{\mu,\phi}(x,t)\bigr)\,\widetilde\Delta_{\mu,\phi}(x,t).
\]
Since $F\in C^1(0,\infty)$ is bi-Lipschitz on $J_\phi$ with constant
$\lambda_\phi$, letting $s'\to s$ in the bi-Lipschitz inequality gives
$\lambda_\phi^{-1}\le|F'(s)|\le\lambda_\phi$ for all $s\in J_\phi$
(see Remark~\ref{rem:scope}). Applying
this with $s=D_{\mu,\phi}(x,t)\in J_\phi$ gives
\eqref{eq:comparable-smooth-tilde}.
\end{proof}

\begin{proof}[Proof of Theorem~\ref{thm:smooth}]
Let $B(x_0,R)$ be a ball with $x_0\in\supp(\mu)$. Since $\mu$ gives no mass to
$\R^d\setminus\supp(\mu)$, we may integrate \eqref{eq:comparable-smooth} squared
against $d\mu(x)\,\frac{dt}{t}$ over $B(x_0,R)\times(0,R)$, and we obtain that
\eqref{eq:carlesonFphi} holds if and only if the Carleson condition for
$\Delta_{\mu,\phi}$ holds, with constants that differ by at most a factor
$\lambda_\phi^2$. By \cite[Theorem 1.2]{CGLT}, the latter is equivalent to the
uniform $n$-rectifiability of $\mu$. This proves that (a) and (b) are equivalent. If
$F\in C^1(0,\infty)$, the same argument with \eqref{eq:comparable-smooth-tilde} in
place of \eqref{eq:comparable-smooth} shows that \eqref{eq:carlesonFphitilde} is
equivalent to the Carleson condition for $\widetilde\Delta_{\mu,\phi}$, which by
\cite[Theorem 1.2]{CGLT} is again equivalent to (a).

For $F=\log$, the mean value theorem on $J_\phi$ as in the proof of
Corollary~\ref{cor:log} gives that $\log$ is bi-Lipschitz on $J_\phi$ with constant
$\max\bigl(C_\phi c_0,\,c_0/\phi(1)\bigr)$, which is the largest of the
values of $1/s$ and $s$ for $s\in J_\phi$.
\end{proof}

\subsection*{Higher order square functions}

In \cite[Proposition 1.3]{CGLT} the characterization is also obtained with the
higher order square functions $\Delta^k_{\mu,\phi}$ and $\widetilde\Delta^k_{\mu,\phi}$,
$k\ge1$, where $\Delta^k_{\mu,\phi}(x,t)$ is the $k$-th iterated difference of
$s\mapsto D_{\mu,\phi}(x,s)$ over the scales $t,2t,\dots,2^kt$ and
\[
\widetilde\Delta^k_{\mu,\phi}(x,t)=\int\partial^k_\phi(y-x,t)\,d\mu(y)
=t^k\partial_t^kD_{\mu,\phi}(x,t),
\qquad \partial^k_\phi(x,t)=t^k\partial_t^k\phi_t(x).
\]
For $F\in C^k(0,\infty)$ we consider the transformed quantities
\[
\widetilde\Delta^{F,k}_{\mu,\phi}(x,t):=t^k\partial_t^k\,F\bigl(D_{\mu,\phi}(x,t)\bigr),
\qquad k\ge1,
\]
so that $\widetilde\Delta^{F,1}_{\mu,\phi}=\widetilde\Delta^F_{\mu,\phi}$. If $F$ is
affine on $J_\phi$, say $F(s)=as+b$ with $a\ne0$, then
$\widetilde\Delta^{F,k}_{\mu,\phi}=a\widetilde\Delta^k_{\mu,\phi}$ and there is nothing to
prove. For nonlinear $F$ the change of variables argument of Lemma~\ref{lem:pointwise}
breaks down for $k\ge2$: for $k=2$ the chain rule gives
\begin{equation}\label{eq:k2}
\widetilde\Delta^{F,2}_{\mu,\phi}
=F'\bigl(D_{\mu,\phi}\bigr)\,\widetilde\Delta^2_{\mu,\phi}
+F''\bigl(D_{\mu,\phi}\bigr)\,\bigl(\widetilde\Delta^1_{\mu,\phi}\bigr)^2,
\end{equation}
and in general,
\begin{equation}\label{eq:faa}
\widetilde\Delta^{F,k}_{\mu,\phi}
=F'\bigl(D_{\mu,\phi}\bigr)\,\widetilde\Delta^k_{\mu,\phi}
+\sum F^{(m)}\bigl(D_{\mu,\phi}\bigr)\,c_{m_1,\dots,m_k}
\prod_{j=1}^{k}\bigl(\widetilde\Delta^{j}_{\mu,\phi}\bigr)^{m_j},
\end{equation}
where the sum runs over the $(m_1,\dots,m_k)$ with $\sum_jjm_j=k$ and
$m=\sum_jm_j\ge2$, and the $c_{m_1,\dots,m_k}$ are combinatorial constants. The extra
terms are products of lower order derivatives of $D_{\mu,\phi}$ and are not controlled
pointwise by $\widetilde\Delta^k_{\mu,\phi}$, so $\widetilde\Delta^{F,k}_{\mu,\phi}$ and
$\widetilde\Delta^k_{\mu,\phi}$ are not comparable pointwise in general. Nevertheless,
if the derivatives of $F$ up to order $k$ are bounded on $J_\phi$, the characterization
still holds. The two ingredients of the proof are that the $\widetilde\Delta^j_{\mu,\phi}$ are
bounded, and that $\widetilde\Delta^{F,j-1}_{\mu,\phi}$ can be recovered from
$\widetilde\Delta^{F,j}_{\mu,\phi}$ by an integration in the scale variable.

\begin{proposition}\label{prop:higher}
Let $\mu$ be $n$-AD-regular with constant $c_0$ and $\mu(\R^d)=\infty$, let $\phi$ be
as in Theorem~\ref{thm:smooth}, and let $k\ge1$. Let $F\in C^k(0,\infty)$ satisfy
\[
\lambda^{-1}\le|F'(s)|\le\lambda
\quad\text{and}\quad
|F^{(m)}(s)|\le\Lambda\ \ (2\le m\le k)
\qquad\text{for all } s\in J_\phi.
\]
Then $\mu$ is uniformly $n$-rectifiable if and only if there exists a constant $c$
such that, for any ball $B(x_0,R)$ centered at $\supp(\mu)$,
\begin{equation}\label{eq:carlesonFk}
\int_0^R\!\!\int_{x\in B(x_0,R)}|\widetilde\Delta^{F,k}_{\mu,\phi}(x,t)|^2\,d\mu(x)\,\frac{dt}{t}
\;\le\;c\,R^n.
\end{equation}
The constants depend on $k$, $\lambda$, $\Lambda$, $\phi$, $n$ and $c_0$.
\end{proposition}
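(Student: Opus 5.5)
We prove both implications using the Faà di Bruno expansion \eqref{eq:faa}, the higher-order characterization \cite[Proposition 1.3]{CGLT}, and two facts about the quantities $\widetilde\Delta^j_{\mu,\phi}$. The first fact is \emph{pointwise boundedness}: for $x\in\supp(\mu)$, $t>0$ and $1\le j\le k$, we have $|\widetilde\Delta^j_{\mu,\phi}(x,t)|\le C(j,\phi,n,c_0)$. This follows as in Lemma~\ref{lem:confinement-smooth}. Write $\partial^j_\phi(y,t)=t^{-n}\psi_j(y/t)$, where $\psi_j$ is a finite combination of $\phi$ and its radial derivatives times polynomials in $|y|$. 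For both kernel classes, $|\psi_j(\rho)|\lesssim(1+\rho)^{-2a}$ (Gaussian-type kernels decay faster than any power), and the same annular sum converges because $2a>n$. The second fact is that, by Lemma~\ref{lem:confinement-smooth}, $D_{\mu,\phi}\in J_\phi$, so all the values $F^{(m)}(D_{\mu,\phi})$ are bounded by $\lambda$ or $\Lambda$.

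\emph{(a)$\Rightarrow$\eqref{eq:carlesonFk}.} By \cite[Proposition 1.3]{CGLT}, uniform rectifiability gives the Carleson condition for every $\widetilde\Delta^j_{\mu,\phi}$ with $1\le j\le k$. In \eqref{eq:faa}, every nonlinear term is a product $\prod_j(\widetilde\Delta^j)^{m_j}$ with $m\ge2$ factors. Bound all but one factor by the pointwise constant; what remains is at most $C\sum_{j}|\widetilde\Delta^j_{\mu,\phi}|$, and each of these is Carleson. The linear term is at most $\lambda|\widetilde\Delta^k_{\mu,\phi}|$. Squaring and integrating gives \eqref{eq:carlesonFk}.

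\emph{\eqref{eq:carlesonFk}$\Rightarrow$(a).} This is the main obstacle, because the nonlinear terms cannot be absorbed pointwise. The plan is to go down in order. Set $G(x,t)=F(D_{\mu,\phi}(x,t))$; it is smooth in $t$ and bounded, since $F$ is continuous on $J_\phi$. We prove by downward induction on $j$ that the Carleson condition for $t^j\partial_t^jG$ implies the one for $t^{j-1}\partial_t^{j-1}G$ when $j\ge2$. Use $u=\log t$: then $t\partial_t=\partial_u$, and $t^j\partial_t^j$ is a fixed linear combination of $\partial_u,\dots,\partial_u^j$ (Stirling numbers). It is therefore cleaner to prove Carleson control for $\partial_u^jG$, which is equivalent to control of $\{t^i\partial_t^iG\}_{i\le j}$ by the triangular change of basis. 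The reduction step is as follows. We know that $\partial_u^{j-1}G(x,\cdot)$ is bounded, because by Faà di Bruno it is a combination of products of bounded $\widetilde\Delta^i$. It also tends to $0$ as $u\to-\infty$, and we take that limit from the small-scale behaviour treated in \cite{CGLT} (for instance, via a Hardy-type inequality). A weighted Hardy inequality in $u$ then controls $\int|\partial_u^{j-1}G|^2du$ over $u<\log R$ by $\int|\partial_u^jG|^2du$ over $u<\log R+C$, plus a boundary contribution at the top scale that is $O(1)$ per point. The boundary term contributes $\lesssim\mu(B(x_0,R))\lesssim R^n$. The enlarged range $t<CR$ is handled by covering $B(x_0,R)$ with boundedly many balls of radius $R$ and applying the hypothesis at radius $CR$. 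If the Hardy step fails because the limit at $u\to-\infty$ is not available, the fallback is to integrate from the top instead: write $\partial_u^{j-1}G(u)=\partial_u^{j-1}G(u_0)-\int_u^{u_0}\partial_u^jG$ with Schur-type weights $e^{\epsilon(u-u_0)}$, which would need a small-$\epsilon$ averaging argument.

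The induction reaches $j=1$ and yields the Carleson condition for $\widetilde\Delta^{F,1}_{\mu,\phi}=\widetilde\Delta^F_{\mu,\phi}$. Since $\lambda^{-1}\le|F'|\le\lambda$ on $J_\phi$, Theorem~\ref{thm:smooth}(c) gives uniform rectifiability. All constants depend only on $k,\lambda,\Lambda,\phi,n,c_0$.
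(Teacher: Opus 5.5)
Your necessity argument is correct and is the paper's. The sufficiency direction has a genuine gap, caused by switching to the pure logarithmic derivatives $\partial_u^jG$.

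First, the reduction is circular. The hypothesis \eqref{eq:carlesonFk} controls only $t^k\partial_t^kG=\partial_u(\partial_u-1)\cdots(\partial_u-k+1)G$. Expressing $\partial_u^kG$ through the Stirling change of basis requires Carleson control of $t^i\partial_t^iG$ for all $i<k$, which is exactly what the descent is supposed to produce.

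Second, and more seriously, passing from $\partial_u^jG$ to $\partial_u^{j-1}G$ means inverting $\partial_u$ on $L^2(du)=L^2(dt/t)$. That inverse is unbounded, and the Hardy inequality you invoke does not exist. Take $h(u)=\cos\sqrt{|u|}/\sqrt{1+|u|}$ for $u<0$. Then $h$ is bounded, $h(u)\to0$ as $u\to-\infty$, and $|h'(u)|\lesssim(1+|u|)^{-1}$ is square integrable, but $\int_{-\infty}^0|h|^2\,du=\infty$. Moreover $h$ has a bounded antiderivative, so the a priori boundedness of $G$ and an $O(1)$ boundary term at the top scale do not help either.

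Third, the limit $\partial_u^{j-1}G\to0$ as $u\to-\infty$ is not available. In the sufficiency direction nothing is known about small scales, and \cite{CGLT} gives no such information before rectifiability is established. The fallback with weights $e^{\epsilon(u-u_0)}$ does not correspond to any identity satisfied by $G$, so it cannot be made to work as described.

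The missing observation is that the normalization $Y_j=t^j\partial_t^jG$ already contains a decaying weight. One has $Y_j=(\partial_u-(j-1))Y_{j-1}$, that is, $tY_{j-1}'=(j-1)Y_{j-1}+Y_j$, and for $j\ge2$ the operator $\partial_u-(j-1)$ is invertible. The homogeneous solutions $Ct^{j-1}$ are unbounded as $t\to\infty$. On the other hand, $Y_{j-1}$ is bounded by \eqref{eq:faa} and Lemma~\ref{lem:bounded-higher}, which you already have. Hence
\[
Y_{j-1}(t)=-\int_t^\infty\Bigl(\frac{t}{t'}\Bigr)^{j-1}Y_j(t')\,\frac{dt'}{t'}.
\]
This integrates from the top, so no limit at small scales is needed. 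Split the integral at $t'=R$:
\begin{itemize}
\item The part $t'>R$ is $O((t/R)^{j-1})$ and contributes at most a constant times $\mu(B(x_0,R))\lesssim R^n$.
\item The part $t'\le R$ is the convolution in $u=\log t$ of $Y_j\chi_{\{t\le R\}}$ with the kernel $e^{(j-1)u}\chi_{\{u\le0\}}$, whose $L^1$ norm is $1/(j-1)$. Young's inequality bounds it by $(j-1)^{-2}\int_0^R|Y_j|^2\,dt/t$, with no enlargement of the ball.
\end{itemize}
Descending from $j=k$ to $j=1$ and then using $|F'|\ge\lambda^{-1}$ via Theorem~\ref{thm:smooth} finishes the proof as you intended.
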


The proof, together with the uniform bound on the $\widetilde\Delta^j_{\mu,\phi}$
(Lemma~\ref{lem:bounded-higher}), is given in Appendix~\ref{app:higher}.

\begin{remark}
The hypotheses of Proposition~\ref{prop:higher} are satisfied by $F=\log$ for every
$k$, with $\Lambda$ depending on $k$ and $J_\phi$, since $(\log)^{(m)}(s)=(-1)^{m-1}(m-1)!\,s^{-m}$
is bounded on $J_\phi$. For the iterated differences $\Delta^k_{\mu,\phi}$ an analogue of the necessity
part can be obtained by a similar, though not identical, argument: expanding $F$ by
Taylor's formula around $D_{\mu,\phi}(x,2^kt)$, the $k$-th difference of
$F(D_{\mu,\phi})$ equals $F'(D_{\mu,\phi}(x,2^kt))\Delta^k_{\mu,\phi}(x,t)$ plus a sum
of products of first order differences $\Delta_{\mu,\phi}(x,2^it)$, $0\le i<k$, which
are bounded, and the Carleson estimate for these shifted scales is obtained from
\eqref{eq:carleson} on the enlarged ball $B(x_0,2^kR)$. We omit the details. The
sufficiency part does not follow in the same way, because the relation
$\Delta^{k-1}_{\mu,\phi}(x,t)-\Delta^{k-1}_{\mu,\phi}(x,2t)=\Delta^k_{\mu,\phi}(x,t)$
has no decaying weight and $\Delta^{k-1}_{\mu,\phi}(x,2^It)$ need not tend to $0$ as
$I\to\infty$. We do not pursue this here.
\end{remark}

\section{Bounded support}\label{sec:bounded}

In this section we assume that $\mu(\R^d)<\infty$, and we set
$R_0:=\diam(\supp(\mu))<\infty$. Recall that $J_0=[c_0^{-1},c_0]$ is the interval
of Lemma~\ref{lem:confinement}. Since \eqref{eq:ADR} only applies at radii
$r\le R_0$, the density $\Dm(x,2r)$ can leave $J_0$ when $2r>R_0$, and we will work
instead on the larger interval
\[
J_1:=\bigl[2^{-n}c_0^{-1},\,c_1\bigr],\qquad c_1:=5^d\,c_0,
\]
which contains $J_0$. The constant $c_1$ comes from the bound
$c_0^{-1}R_0^n\le\mu(\R^d)\le c_1R_0^n$, proved by a covering argument in
Lemma~\ref{lem:totalmass} in Appendix~\ref{app:bounded}. 

We now distinguish three ranges of scales. The density lies in $J_0$ only in the first
one.
\begin{itemize}
\item[(i)] $0<r\le R_0/2$: both $r$ and $2r$ are admissible in \eqref{eq:ADR}, and
$\Dm(x,r),\Dm(x,2r)\in J_0$.
\item[(ii)] $R_0/2<r\le R_0$: the radius $2r$ exceeds $R_0$, so
$\supp(\mu)\subset B(x,2r)$ and $\mu(B(x,2r))=\mu(\R^d)$. By
Lemma~\ref{lem:totalmass} and $R_0/2<r\le R_0$,
$\Dm(x,2r)=\mu(\R^d)(2r)^{-n}\in\bigl[2^{-n}c_0^{-1},\,c_1\bigr]=J_1$.
Also $\Dm(x,r)\in J_0\subset J_1$, since $c_0\le c_1$.
\item[(iii)] $r>R_0$: both balls contain $\supp(\mu)$, so
$\Dm(x,r)=\mu(\R^d)r^{-n}$ and $\Dm(x,2r)=\mu(\R^d)(2r)^{-n}$ tend to $0$ as
$r\to\infty$, and the behavior of $F$ near $0$ enters.
\end{itemize}

\begin{proposition}\label{prop:bounded}
Let $\mu$ be $n$-AD-regular with constant $c_0$ and $\mu(\R^d)<\infty$, and let
$F\colon(0,\infty)\to\R$ be a Borel function and bi-Lipschitz on
$J_1=[2^{-n}c_0^{-1},c_1]$ with constant $\lambda_1$, where $c_1=5^dc_0$.
\begin{itemize}
\item[(a)] If \eqref{eq:carlesonF} holds for some constant $c$, then $\mu$ is uniformly
$n$-rectifiable. No hypothesis on $F$ near $0$ is required.
\item[(b)] Conversely, if $\mu$ is uniformly $n$-rectifiable and in addition
\begin{equation}\label{eq:tail}
\sup_{0<\varepsilon\le c_1}\ \varepsilon\int_\varepsilon^{c_1}
\bigl|F(t)-F(2^{-n}t)\bigr|^2\,\frac{dt}{t}\;<\;\infty,
\end{equation}
then \eqref{eq:carlesonF} holds. Condition \eqref{eq:tail} is satisfied whenever
$t\mapsto F(t)-F(2^{-n}t)$ is bounded on $(0,c_1]$, and for $F$ continuous on
$(0,\infty)$ this is the same as being bounded near $0$; in particular \eqref{eq:tail}
holds for $F=\log$, for which $F(t)-F(2^{-n}t)\equiv n\log 2$.
\end{itemize}
Consequently, for $F=\log$ the full equivalence of Corollary~\ref{cor:log} holds with no
restriction on the total mass, with $\lambda_1=5^dc_0$.
\end{proposition}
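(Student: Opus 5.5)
The plan is to split the integral in \eqref{eq:carlesonF} according to the three ranges (i)--(iii) of scales, and to reduce everything to Theorem~\ref{thm:CGLT}. That theorem holds for all $n$-AD-regular measures; in the bounded case we use the version in \cite[Section 2.1]{CGLT}. On ranges (i) and (ii) both densities lie in $J_1$. Then the argument of Lemma~\ref{lem:pointwise}, with $J_1$ and $\lambda_1$ in place of $J_0$ and $\lambda$, gives the pointwise comparability $\lambda_1^{-1}|\Delta_\mu|\le|\Delta^F_\mu|\le\lambda_1|\Delta_\mu|$ for $x\in\supp(\mu)$ and $0<r\le R_0$. The membership facts themselves rest on the mass bound of Lemma~\ref{lem:totalmass}.

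For (a), fix a ball $B(x_0,R)$ with $x_0\in\supp(\mu)$. If $R\le R_0$, only scales $r\le R_0$ occur, so squaring and integrating gives \eqref{eq:carleson} with constant $\lambda_1^2c$. If $R>R_0$, split $(0,R)$ into $(0,R_0]$ and $(R_0,R)$. The first piece is bounded by $\lambda_1^2$ times the integral of $|\Delta^F_\mu|^2$ over $(0,R)$, hence by $\lambda_1^2cR^n$. On the second piece $\Delta_\mu(x,r)=\mu(\R^d)r^{-n}(1-2^{-n})$ is explicit, and
\[
\int_{R_0}^R\mu(\R^d)^2r^{-2n}\,\frac{dr}{r}\cdot\mu(B(x_0,R))\lesssim c_1^2R_0^{2n}R_0^{-2n}\mu(\R^d)\lesssim R_0^n\le R^n .
\]
This uses nothing about $F$ near $0$. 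It is also enough to take $R\le R_0$: by the remark that uniform rectifiability only concerns $r\le\diam(\supp(\mu))$, the sufficiency direction of \cite{CGLT} uses only balls of radius $\lesssim R_0$. Either way \eqref{eq:carleson} holds, and Theorem~\ref{thm:CGLT} gives uniform rectifiability.

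For (b), Theorem~\ref{thm:CGLT} gives \eqref{eq:carleson}. The part of \eqref{eq:carlesonF} over $r\le\min(R,R_0)$ is then bounded by $\lambda_1^2c'R^n$. It remains, for $R>R_0$, to bound
\[
\int_{R_0}^R\!\!\int_{B(x_0,R)}|F(m r^{-n})-F(m(2r)^{-n})|^2\,d\mu\,\frac{dr}{r},\qquad m:=\mu(\R^d).
\]
Substitute $t=mr^{-n}$, so that $dr/r=n^{-1}\,dt/t$ and $t$ ranges over $[mR^{-n},mR_0^{-n}]\subset[\varepsilon,c_1]$ with $\varepsilon=mR^{-n}$. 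The bound $mR_0^{-n}\le c_1$ comes from Lemma~\ref{lem:totalmass}, and the integrand becomes $|F(t)-F(2^{-n}t)|^2$. The $\mu$-integral contributes at most $m$. The whole term is therefore at most $n^{-1}m\int_\varepsilon^{c_1}|F(t)-F(2^{-n}t)|^2dt/t$, which equals $n^{-1}R^n\,\varepsilon\int_\varepsilon^{c_1}\cdots$ and is $\lesssim R^n$ by \eqref{eq:tail}. This change of variables, and recognizing that the factor $m=\varepsilon R^n$ is exactly what produces the weight $\varepsilon$ in \eqref{eq:tail}, is the main step. If $F(t)-F(2^{-n}t)$ is bounded by $K$, then $\varepsilon\int_\varepsilon^{c_1}K^2dt/t=K^2\varepsilon\log(c_1/\varepsilon)$, which is bounded on $(0,c_1]$. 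For continuous $F$, boundedness on $(0,c_1]$ reduces to boundedness near $0$ by compactness.

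For $F=\log$, the mean value argument of Corollary~\ref{cor:log} on $J_1$ gives $\lambda_1=\max(c_1,2^nc_0)$. Since $2^n\le 5^d$, this equals $5^dc_0$. Moreover $F(t)-F(2^{-n}t)=n\log2$ is constant, so both (a) and (b) apply.
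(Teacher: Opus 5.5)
Your proposal is correct and follows the paper's proof essentially step for step. You use comparability on $J_1$ for $r\le R_0$, and in (a) a direct estimate of $\Delta_\mu$ on $(R_0,R]$: you use the exact value $\mu(\R^d)r^{-n}(1-2^{-n})$ where the paper uses the bound $|\Delta_\mu|\le \Dm$. In (b) you use the substitution $t=\mu(\R^d)r^{-n}$ with $\mu(\R^d)=\varepsilon R^n$.

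Your aside that in (a) it would suffice to take $R\le R_0$ is not justified as written. It is also not needed, since your splitting argument already handles $R>R_0$.
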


The proof is given in Appendix~\ref{app:bounded}.

\begin{remark}
Some hypothesis on $F$ near $0$ cannot be dispensed with in (b). In (a)
the large scales of $\Delta^F_\mu$ are not used at all, and only the tail of
$\Delta_\mu$ has to be estimated, so the behavior of $F$ near $0$ does not enter. In
(b) one has to estimate the large scales of $\Delta^F_\mu$ itself. Take for instance
$F(t)=-t^{-\gamma}$ with $\gamma>0$, which is strictly increasing and smooth on
$(0,\infty)$. Then $F(t)-F(2^{-n}t)=(2^{n\gamma}-1)\,t^{-\gamma}$, and
\[
\varepsilon\int_\varepsilon^{c_1}\bigl|F(t)-F(2^{-n}t)\bigr|^2\,\frac{dt}{t}
=\frac{(2^{n\gamma}-1)^2}{2\gamma}\,\varepsilon\bigl(\varepsilon^{-2\gamma}-c_1^{-2\gamma}\bigr)
\approx\varepsilon^{1-2\gamma}\qquad\text{as }\varepsilon\to0.
\]
Thus \eqref{eq:tail} holds for $0<\gamma\le\frac12$, although $F(t)-F(2^{-n}t)$ is
unbounded near $0$, and fails for $\gamma>\frac12$. For $\gamma>\frac12$ the identity
in the proof of (b) (see Appendix~\ref{app:bounded}) shows that the large-scale part of \eqref{eq:carlesonF} is
comparable to $\mu(\R^d)\,\varepsilon^{-2\gamma}$ with $\varepsilon=\mu(\R^d)R^{-n}$,
that is, to $\mu(\R^d)^{1-2\gamma}R^{2\gamma n}$, which is not bounded by $cR^n$ as
$R\to\infty$. Hence \eqref{eq:carlesonF} fails at large scales for every uniformly
$n$-rectifiable $\mu$ with $\mu(\R^d)<\infty$, and for such $F$ the square function
condition is strictly stronger than uniform rectifiability. This does not happen when
$\mu(\R^d)=\infty$.
\end{remark}

\section{Degenerate transformations}\label{sec:degenerate}

In this section we discuss the case where $F$ is strictly increasing and continuous
but not bi-Lipschitz on $J_0$. Examples are $F(t)=(t-c_*)^3$ with
$c_*\in\operatorname{int}J_0$, which has a critical point inside $J_0$, and functions
$F$ which are H\"older but not Lipschitz. Throughout this section we assume that
$\mu(\R^d)=\infty$. The two implications in Theorem~\ref{thm:main} behave
differently, and we state them separately with different hypotheses.

\begin{proposition}\label{prop:degenerate-easy}
Let $F$ be strictly increasing and $\alpha$-H\"older on $J_0$ for some
$\alpha\in(0,1]$, with constant $H$. If $\mu$ is uniformly $n$-rectifiable, then for
every ball $B(x_0,R)$ centered at $\supp(\mu)$,
\[
\int_0^R\!\!\int_{B(x_0,R)}|\Delta^F_\mu(x,r)|^{2/\alpha}\,d\mu(x)\,\frac{dr}{r}
\;\le\;H^{2/\alpha}\,c'\,R^n,
\]
where $c'$ is the constant in \eqref{eq:carleson}.
\end{proposition}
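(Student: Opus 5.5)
The plan is the same pointwise change of variables as in Lemma~\ref{lem:pointwise}, using only the upper (H\"older) half of the hypothesis, in the spirit of Remark~\ref{rem:onesided}. Fix $x\in\supp(\mu)$ and $r>0$, and set $s=\Dm(x,r)$ and $t=\Dm(x,2r)$. Since $\mu(\R^d)=\infty$, Lemma~\ref{lem:confinement} gives $s,t\in J_0$. The H\"older condition on $J_0$ then yields
\[
|\Delta^F_\mu(x,r)|=|F(s)-F(t)|\le H|s-t|^\alpha=H\,|\Delta_\mu(x,r)|^\alpha .
\]
Raising both sides to the power $2/\alpha$ gives the pointwise bound
\[
|\Delta^F_\mu(x,r)|^{2/\alpha}\le H^{2/\alpha}|\Delta_\mu(x,r)|^2,
\]
valid for every $x\in\supp(\mu)$ and every $r>0$.

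Next, fix a ball $B(x_0,R)$ with $x_0\in\supp(\mu)$. Because $\mu(\R^d\setminus\supp(\mu))=0$, and because $\Delta^F_\mu$ was set to $0$ off the support by convention, the inner integrals only see points of $\supp(\mu)$. There the pointwise bound holds for all $r\in(0,R)$. Integrating it against $d\mu(x)\,dr/r$ over $B(x_0,R)\times(0,R)$ bounds the left-hand side of the claim by $H^{2/\alpha}$ times the left-hand side of \eqref{eq:carleson}.

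Finally, since $\mu$ is uniformly $n$-rectifiable, Theorem~\ref{thm:CGLT} supplies \eqref{eq:carleson} with constant $c'$. This gives the stated bound $H^{2/\alpha}c'R^n$.

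There is essentially no obstacle; the only points to check are bookkeeping. First, measurability of $(x,r)\mapsto\Delta^F_\mu(x,r)$: if one does not want to assume $F$ is Borel on all of $(0,\infty)$, note that $F$ is continuous on $J_0$ by the H\"older assumption and only its values on $J_0$ are used. Second, the exponent: $2/\alpha\ge 2$, which is exactly what is needed to turn $|\Delta_\mu|^{\alpha}$ into $|\Delta_\mu|^2$. The strict monotonicity of $F$ is not used in this direction.
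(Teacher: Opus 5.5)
Your proposal is correct and is the paper's own argument: the pointwise bound $|\Delta^F_\mu|\le H|\Delta_\mu|^\alpha$ on $\supp(\mu)\times(0,\infty)$ from Lemma~\ref{lem:confinement}, raised to the power $2/\alpha$ and integrated against \eqref{eq:carleson}. Your extra bookkeeping on measurability and on the unused monotonicity is accurate. One small point: the identity $(|\Delta_\mu|^\alpha)^{2/\alpha}=|\Delta_\mu|^2$ holds for any $\alpha>0$, so $2/\alpha\ge2$ plays no role.
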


\begin{proof}
By Lemma~\ref{lem:confinement} we have $|\Delta^F_\mu|\le H|\Delta_\mu|^\alpha$
pointwise on $\supp(\mu)\times(0,\infty)$, and the claim follows from
\eqref{eq:carleson}.
\end{proof}

Note that this argument does not give a bound for the square function of
$\Delta^F_\mu$ with exponent $2$ when $\alpha<1$: the pointwise inequality
$|\Delta^F_\mu|^2\le H^2|\Delta_\mu|^{2\alpha}$ only controls $|\Delta^F_\mu|^2$ by a
power of $|\Delta_\mu|$ smaller than $2$, and \eqref{eq:carleson} says nothing about
$\int\!\!\int|\Delta_\mu|^{2\alpha}\,d\mu\,\frac{dr}{r}$. We do not know whether
\eqref{eq:carlesonF} can actually fail for a uniformly $n$-rectifiable $\mu$ and a
H\"older $F$ which is not Lipschitz on $J_0$.

For the converse implication, the argument in \cite[Section 3]{CGLT} is qualitative
and should still apply, but the reduction used in Section~\ref{sec:proof} does not. The
passage from the non-smooth condition \eqref{eq:carleson} to its smooth version in
\cite[Corollary 3.12]{CGLT} is done by taking convex combinations, which is linear in
the density, and this does not commute with a nonlinear $F$. We state the expected
result as a conjecture, together with the strategy we expect to prove it.

\begin{conjecture}\label{prop:degenerate-hard}
Let $F$ be strictly increasing and continuous on $J_0$, and let $\mu$ be
$n$-AD-regular with $\mu(\R^d)=\infty$. If \eqref{eq:carlesonF} holds, then $\mu$ is
uniformly $n$-rectifiable.
\end{conjecture}

The strategy is to go through the weak constant density condition and
\cite[Theorem 2.3]{CGLT}, as in \cite[Section 3]{CGLT}: the smallness of the local
$\Delta^F$-square function should force, via the uniform continuity of $F^{-1}$ on the
compact interval $F(J_0)$, the smallness of the local oscillation of $\Dm$ itself, with
a modulus depending on $F$; Chebyshev's inequality then makes the complement a
Carleson set, which is the weak constant density condition, and
\cite[Theorem 2.3]{CGLT} gives uniform rectifiability. This replaces the compactness
argument of \cite[Lemmas 3.1--3.6]{CGLT} at one point: where those lemmas conclude
$|\mu(B(y,t))-c\,t^n|<\varepsilon r^n$ from smallness of the $\widetilde\Delta$-square
function, one would start from smallness of the $\Delta^F$-integrals, obtain that
$F(\Dm)$ is nearly constant along dyadic chains of scales, and invert $F$; since
$F^{-1}$ is uniformly continuous on $F(J_0)$, near-constancy of $F(\Dm)$ gives
near-constancy of $\Dm$ with a modulus $\omega_{F^{-1}}$. The quantitative bookkeeping in
\cite[Lemma 3.5]{CGLT} (the exponent $\delta^{n+4}$, the Cauchy--Schwarz step) has to
be redone with $\omega_{F^{-1}}$ in place of a linear modulus, and for $F$ with a very
degenerate modulus the exponents in the Chebyshev step may have to be adjusted. We
have not carried out these details. For bi-Lipschitz $F$ none of this is needed, since
Section~\ref{sec:proof} applies.

\begin{remark}
To summarize, for $F$ which is only H\"older the arguments of this paper give the
following. The sufficiency of \eqref{eq:carlesonF} for uniform rectifiability is
expected to survive, but we have not proved it; this is
Conjecture~\ref{prop:degenerate-hard}. The necessity of \eqref{eq:carlesonF} is no
longer obtained by the change of variables argument, which only gives the
$L^{2/\alpha}$ estimate of Proposition~\ref{prop:degenerate-easy}, and we do not know
whether it holds. Thus the bi-Lipschitz functions of Theorem~\ref{thm:main} are the
class for which we prove the equivalence with quantitative constants; we do not claim
that the equivalence fails outside this class.
\end{remark}

\section{$n$-rectifiability without AD-regularity}\label{sec:qualitative}

In this section we drop the AD-regularity assumption and consider the qualitative
notion of rectifiability. Following \cite[Definition 16.6]{Ma}, a Radon measure $\mu$
in $\R^d$ is \emph{$n$-rectifiable} if $\mu$ vanishes outside an $n$-rectifiable set
$E\subset\R^d$ and $\mu$ is absolutely continuous with respect to $\mathcal H^n|_E$;
here a set $E$ is $n$-rectifiable if $\mathcal H^n(E\setminus\bigcup_i f_i(\R^n))=0$ for
some Lipschitz maps $f_i\colon\R^n\to\R^d$. For a Radon measure $\mu$ and
$x\in\R^d$ we write
\[
\Theta^{n,*}(x,\mu):=\limsup_{r\to0}\frac{\mu(B(x,r))}{r^n},
\qquad
\Theta^{n}_*(x,\mu):=\liminf_{r\to0}\frac{\mu(B(x,r))}{r^n}
\]
for the upper and lower $n$-dimensional densities of $\mu$ at $x$. The square function
version of Preiss' theorem \cite{Pr} proved by Tolsa and Toro reads as follows.

\begin{theorem}[{\cite[Theorem 1.1]{TT}}]\label{thm:TT}
Let $\mu$ be a Radon measure in $\R^d$ such that
$0<\Theta^n_*(x,\mu)\le\Theta^{n,*}(x,\mu)<\infty$ for $\mu$-a.e.\ $x\in\R^d$. The
following are equivalent:
\begin{itemize}
\item[(a)] $\mu$ is $n$-rectifiable.
\item[(b)] $\displaystyle\int_0^1|\Delta_\mu(x,r)|^2\,\frac{dr}{r}<\infty$ for
$\mu$-a.e.\ $x\in\R^d$.
\item[(c)] $\displaystyle\lim_{r\to0}\Delta_\mu(x,r)=0$ for $\mu$-a.e.\ $x\in\R^d$.
\end{itemize}
\end{theorem}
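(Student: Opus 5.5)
The plan is to prove the four implications (a)$\Rightarrow$(c), (a)$\Rightarrow$(b), (c)$\Rightarrow$(a) and (b)$\Rightarrow$(a), in increasing order of difficulty. Throughout, the standing hypothesis $0<\Theta^n_*\le\Theta^{n,*}<\infty$ $\mu$-a.e.\ is used to make tangent measures available and non-degenerate.

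\emph{(a)$\Rightarrow$(c).} If $\mu$ is $n$-rectifiable, then for $\mu$-a.e.\ $x$ the density $\Theta^n(x,\mu)=\lim_{r\to0}\Dm(x,r)$ exists and is finite. This is the classical fact that $\mu=g\,\mathcal H^n|_E$ has approximate tangent planes and density $c_n g(x)$ a.e. Hence $\Delta_\mu(x,r)=\Dm(x,r)-\Dm(x,2r)\to0$.

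\emph{(a)$\Rightarrow$(b).} I would first reduce to a piece of $\mu$ carried by a single Lipschitz graph, on which $\mu$ has density bounded above and below. This uses the density hypothesis, a countable decomposition, and the fact that restricting $\mu$ to a set changes $\Dm(x,r)$ only by a quantity whose square function is finite at density points. The latter is the Lebesgue differentiation step, and it needs some care because it must hold in $L^2(dr/r)$ and not merely pointwise. On a Lipschitz graph $\Gamma$ with $\mu=g\,\mathcal H^n|_\Gamma$, I would split
\[
\Delta_\mu(x,r)=\bigl[\Dm(x,r)-g(x)\,\mathcal H^n(\Gamma\cap B(x,r))r^{-n}\bigr]-[\dots]_{2r}+g(x)\,\Delta_{\mathcal H^n|_\Gamma}(x,r).
\]
The last term is controlled by the $\beta$-number/Dorronsoro square-function estimates for the graph, which are finite a.e. The bracketed terms are controlled by a martingale or square-function estimate for $g$ in $L^2$, and this is again finite a.e.

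\emph{(c)$\Rightarrow$(a) and (b)$\Rightarrow$(a).} These go through tangent measures, following Preiss. At $\mu$-a.e.\ $x$, every tangent measure $\nu\in\operatorname{Tan}(\mu,x)$ is, up to normalization, $n$-AD-regular-like: its densities are bounded above and below at all scales and centers, by the density hypothesis and the standard fact that tangents of tangents are tangents. Under (c), the limit passes to the blow-up and gives $\Delta_\nu(y,r)=0$ for all $y\in\supp\nu$ and all $r>0$. One needs to control boundary mass on spheres, and this is handled by taking $r$ outside a countable set and using monotonicity in $r$. Under (b), $\int_0^1|\Delta_\mu(x,r)|^2dr/r<\infty$ forces the tail $\int_0^{\delta}|\Delta_\mu(x,r)|^2dr/r\to0$. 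After rescaling, this gives $\int_0^\infty|\Delta_\nu(0,r)|^2dr/r=0$ at the base point. Using the Preiss principle that $\nu$-a.e.\ point of a tangent is again a good point, this extends to $\nu$-a.e.\ $y$, and then to all $y\in\supp\nu$ by continuity in $y$ for a.e.\ $r$. So in both cases it suffices to show the following: if a measure $\nu$ with bounded positive densities satisfies $\nu(B(y,r))=2^{-n}\nu(B(y,2r))$ for all $y\in\supp\nu$ and $r>0$, then $\nu$ is a flat measure $c\,\mathcal H^n|_L$ for an $n$-plane $L$. Mattila's/Preiss' criterion (all tangents flat implies rectifiable) then yields (a).

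The main obstacle is this classification of the tangent measures $\nu$. Iterating the identity gives $\nu(B(y,2^{-k}r))=2^{-kn}\nu(B(y,r))$, so $\nu$ is self-similar in the dyadic sense at every point of its support. This is weaker than $n$-uniformity, because the constant may depend on $y$ and only dyadic ratios are fixed. My first attempt would be to exploit the freedom in $r$: the identity holds for every $r$, so the function $r\mapsto\nu(B(y,r))r^{-n}$ is multiplicatively $2$-periodic in $r$. I would then blow up once more at a point and scale where this periodic function attains its maximum, to get a further tangent that is genuinely $n$-uniform. Preiss' theorem (flatness at infinity and at zero of uniform measures), or the Kirchheim--Preiss analyticity and moment arguments, would then show that this tangent is flat. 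Flatness of one tangent at a.e.\ point suffices by Preiss' connectedness of the tangent cone, so this gives flatness of all tangents, and hence rectifiability.
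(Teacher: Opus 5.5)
The paper gives no proof of this statement. It quotes it from Tolsa--Toro and uses it only as a black box in Theorem~\ref{thm:TT-F}, so your proposal has to stand on its own. The blow-up reduction is fine: at $\mu$-a.e.\ $x$ every $\nu\in\operatorname{Tan}(\mu,x)$ is $n$-AD-regular and, under (b) or (c), satisfies $\Delta_\nu(y,r)=0$ for all $y\in\supp\nu$ and $r>0$. The classification step, however, has a genuine gap, in two places.

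\emph{The statement you reduce to is false.} Take $n=3$, $d=4$, and the Kowalski--Preiss cone $K=\{x\in\R^4: x_4^2=x_1^2+x_2^2+x_3^2\}$. The measure $\mathcal H^3|_K$ is $3$-uniform. So it has bounded positive densities and satisfies $\nu(B(y,2r))=2^n\nu(B(y,r))$ exactly, but it is not flat. For the same reason, Preiss' results do not make a uniform tangent flat. They make flat a uniform measure that is flat at infinity. Non-flat uniform tangents can only be excluded through the whole tangent set: a flat tangent obtained as a tangent of a tangent, connectedness of the set of limits of the normalized blow-ups, and Preiss' gap for uniform measures. That argument requires \emph{every} element of $\operatorname{Tan}(\mu,x)$ to be $n$-uniform, not just one.

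\emph{Your mechanism does not produce even one uniform tangent.} Suppose $f_y(r)=\nu(B(y,r))r^{-n}$ is $2$-periodic, and $\sigma$ is a blow-up of $\nu$ at $y$ along $\rho_i=2^{-k_i}s_i$ with $s_i\in[1,2)$, $s_i\to s$. Then $\sigma(B(0,r))r^{-n}=f_y(sr)$ at continuity points. So every further blow-up at $y$ reproduces the same periodic profile up to a shift, whatever point and scale you pick. Choosing the maximum gives no extra equation.

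The missing idea is to prove rectifiability of $\nu$ first and deduce uniformity from it. Since $\nu$ is $n$-AD-regular and $\Delta_\nu\equiv0$ on $\supp\nu\times(0,\infty)$, the Carleson condition of Theorem~\ref{thm:CGLT} holds with constant $0$. Hence $\nu$ is uniformly rectifiable, and $\Theta^n(y,\nu)$ exists for $\nu$-a.e.\ $y$.

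At such $y$, periodicity gives $f_y(s)=\lim_k f_y(2^{-k}s)=\Theta^n(y,\nu)$ for every $s>0$. Next use $\nu(B(z,r-|y-z|))\le\nu(B(y,r))\le\nu(B(z,r+|y-z|))$ and let $r\to\infty$; this shows the constant does not depend on $y$. Since these points are dense in $\supp\nu$, you get $\nu(B(y,r))=c\,r^n$ for all $y\in\supp\nu$ and $r>0$. Thus every tangent at $\mu$-a.e.\ point is $n$-uniform. Preiss' connectedness and gap argument then shows that all tangents are flat, which gives (a).

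A smaller caution for (a)$\Rightarrow$(b): do the restriction step at the level of the differences $q(x,r)-q(x,2r)$, where $q(x,r)=\mu(B(x,r)\setminus A)\,r^{-n}$. The reason is that $q(x,\cdot)$ itself can fail to be square integrable in $dr/r$ at every point of a set $A$ of positive measure. Lebesgue differentiation does not hold in $L^2(dr/r)$.
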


Note that no AD-regularity and no doubling condition is assumed, and that the density
bounds are only asymptotic and pointwise, with constants depending on $x$. The
change of variables of Section~\ref{sec:proof} still applies, because at $\mu$-a.e.\
point the density stays in a compact interval of $(0,\infty)$ for all the scales
involved. We say that $F\colon(0,\infty)\to\R$ is \emph{locally bi-Lipschitz} if it is
bi-Lipschitz on every compact interval $J\subset(0,\infty)$, with a constant $\lambda_J$
depending on $J$; by Remark~\ref{rem:scope}, every $F\in C^1(0,\infty)$ with $F'\ne0$
is locally bi-Lipschitz, and such an $F$ is automatically continuous and strictly
monotone.

\begin{lemma}\label{lem:confinement-ae}
Let $\mu$ be a Radon measure in $\R^d$ and let $x\in\R^d$ be such that
$0<\Theta^n_*(x,\mu)\le\Theta^{n,*}(x,\mu)<\infty$. Then there exist
$0<a_x\le b_x<\infty$ such that
\[
a_x\;\le\;\Dm(x,r)\;\le\;b_x\qquad\text{for all } 0<r\le 2.
\]
In particular $\Dm(x,r),\Dm(x,2r)\in J_x:=[a_x,b_x]$ for all $0<r\le1$.
\end{lemma}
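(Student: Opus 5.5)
We split the range $(0,2]$ into a small-scale part, where the hypotheses on the densities at $x$ apply, and a large-scale part, where monotonicity of $r\mapsto\mu(B(x,r))$ and $x\in\supp(\mu)$ give crude bounds. The only delicate point is that $\Theta^{n}_*$ and $\Theta^{n,*}$ are limits, so they only control $\Dm(x,r)$ for $r$ below some threshold $r_x$.

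\emph{Small scales.} Set $\theta_*:=\Theta^n_*(x,\mu)>0$ and $\theta^*:=\Theta^{n,*}(x,\mu)<\infty$. By the definitions of $\liminf$ and $\limsup$ there is $r_x\in(0,2]$ such that
\[
\tfrac12\theta_*\;\le\;\Dm(x,r)\;\le\;2\theta^*\qquad\text{for all } 0<r\le r_x.
\]

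\emph{Large scales.} For $r_x<r\le2$ we use $\mu(B(x,r_x))\le\mu(B(x,r))\le\mu(B(x,2))$, which gives
\[
\Dm(x,r)\;\ge\;\frac{\mu(B(x,r_x))}{2^n}\;\ge\;\frac{\theta_*r_x^n}{2^{n+1}}>0,
\qquad
\Dm(x,r)\;\le\;\frac{\mu(B(x,2))}{r_x^n}<\infty .
\]
The upper bound is finite because $\mu$ is Radon and $\overline{B(x,2)}$ is compact. The lower bound is positive because $\mu(B(x,r_x))\ge\tfrac12\theta_*r_x^n$ by the small-scale step; equivalently, $\Theta^n_*>0$ forces $x\in\supp(\mu)$.

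\emph{Conclusion.} Take
\[
a_x:=\min\Bigl(\tfrac12\theta_*,\ \frac{\theta_*r_x^n}{2^{n+1}}\Bigr),
\qquad
b_x:=\max\Bigl(2\theta^*,\ \frac{\mu(B(x,2))}{r_x^n}\Bigr).
\]
Then $a_x\le\Dm(x,r)\le b_x$ for all $0<r\le2$. For $0<r\le1$ both $r$ and $2r$ lie in $(0,2]$, which gives the final assertion that $\Dm(x,r),\Dm(x,2r)\in J_x$.

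The only step that needs any care is the passage from the asymptotic bounds to a uniform bound on $(0,2]$: the lower bound at large scales must use monotonicity in $r$, not the $\liminf$. The rest is routine.
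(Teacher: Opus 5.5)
Your proof is correct and follows essentially the same route as the paper. You obtain $r_x$ from the definitions of the lower and upper densities, then use monotonicity of $r\mapsto\mu(B(x,r))$ together with $\mu(B(x,2))<\infty$ on $[r_x,2]$, which gives the same constants $a_x$ and $b_x$; the $\min$ in your $a_x$ is harmless, since $r_x\le 2$ already makes $\theta_*r_x^n/2^{n+1}$ the smaller of the two terms.
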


\begin{proof}
Write $\theta_*=\Theta^n_*(x,\mu)$ and $\theta^*=\Theta^{n,*}(x,\mu)$. By the definition
of $\liminf$ and $\limsup$ there is $r_x\in(0,1]$ such that
\[
\tfrac12\theta_*\;\le\;\Dm(x,r)\;\le\;2\theta^*\qquad\text{for } 0<r\le r_x.
\]
For $r_x\le r\le2$ we use that $r\mapsto\mu(B(x,r))$ is nondecreasing: on the one
hand $\mu(B(x,r))\ge\mu(B(x,r_x))\ge\tfrac12\theta_*r_x^n$, so that
$\Dm(x,r)\ge\tfrac12\theta_*r_x^n\,2^{-n}$; on the other hand
$\mu(B(x,r))\le\mu(B(x,2))<\infty$, since $\mu$ is Radon, so that
$\Dm(x,r)\le\mu(B(x,2))\,r_x^{-n}$. Hence the claim holds with
\[
a_x:=2^{-n-1}\theta_*r_x^n\;\le\;\tfrac12\theta_*,
\qquad
b_x:=\max\bigl(2\theta^*,\;\mu(B(x,2))\,r_x^{-n}\bigr).
\qedhere
\]
\end{proof}

\begin{theorem}\label{thm:TT-F}
Let $\mu$ be a Radon measure in $\R^d$ such that
$0<\Theta^n_*(x,\mu)\le\Theta^{n,*}(x,\mu)<\infty$ for $\mu$-a.e.\ $x\in\R^d$, and
let $F\colon(0,\infty)\to\R$ be locally bi-Lipschitz. The following are equivalent:
\begin{itemize}
\item[(a)] $\mu$ is $n$-rectifiable.
\item[(b)] $\displaystyle\int_0^1|\Delta^F_\mu(x,r)|^2\,\frac{dr}{r}<\infty$ for
$\mu$-a.e.\ $x\in\R^d$.
\item[(c)] $\displaystyle\lim_{r\to0}\Delta^F_\mu(x,r)=0$ for $\mu$-a.e.\ $x\in\R^d$.
\end{itemize}
\end{theorem}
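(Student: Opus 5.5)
The plan is to reduce each condition in Theorem~\ref{thm:TT-F} to the corresponding condition in Theorem~\ref{thm:TT}, pointwise at $\mu$-a.e.\ $x$, by the same change of variables as in Lemma~\ref{lem:pointwise}. The set $G$ of points with $0<\Theta^n_*(x,\mu)\le\Theta^{n,*}(x,\mu)<\infty$ has full $\mu$-measure. We may also assume $G\subset\supp(\mu)$, since the complement of the support is $\mu$-null. For each $x\in G$, Lemma~\ref{lem:confinement-ae} gives a compact interval $J_x=[a_x,b_x]\subset(0,\infty)$ with $\Dm(x,r),\Dm(x,2r)\in J_x$ for all $0<r\le1$. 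Because $F$ is locally bi-Lipschitz, it is bi-Lipschitz on $J_x$ with some constant $\lambda_x:=\lambda_{J_x}$.

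Arguing exactly as in Lemma~\ref{lem:pointwise}, we then obtain for every $x\in G$ and every $0<r\le1$ the estimate
\[
\lambda_x^{-1}|\Delta_\mu(x,r)|\le|\Delta^F_\mu(x,r)|\le\lambda_x|\Delta_\mu(x,r)|.
\]
The constant $\lambda_x$ depends on $x$, but this is harmless because all three conditions are pointwise and only qualitative.

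Squaring and integrating over $r\in(0,1)$ against $dr/r$ shows that for $x\in G$ we have $\int_0^1|\Delta^F_\mu(x,r)|^2\,dr/r<\infty$ if and only if $\int_0^1|\Delta_\mu(x,r)|^2\,dr/r<\infty$. Measurability in $r$ is not an issue, since $F$ is Borel and $r\mapsto\Dm(x,r)$ is Borel. This gives (b) of Theorem~\ref{thm:TT-F} $\Leftrightarrow$ (b) of Theorem~\ref{thm:TT}. Likewise, since the inequality holds for all small $r$, we get $\Delta^F_\mu(x,r)\to0$ if and only if $\Delta_\mu(x,r)\to0$, which gives (c) $\Leftrightarrow$ (c). Combining these with the equivalences (a)$\Leftrightarrow$(b)$\Leftrightarrow$(c) of Theorem~\ref{thm:TT}, applied under the same density hypothesis, proves the theorem. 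The null exceptional sets combine because a finite union of $\mu$-null sets is $\mu$-null.

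I do not expect a real obstacle. The only point needing care is that the confinement of Lemma~\ref{lem:confinement-ae} must cover both radii $r$ and $2r$ for the whole range $0<r\le1$, and the lemma is stated to do exactly that.
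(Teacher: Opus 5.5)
Your proposal is correct and is essentially the paper's proof. You confine both densities to the compact interval $J_x$ of Lemma~\ref{lem:confinement-ae}, use the $x$-dependent constant $\lambda_x=\lambda_{J_x}$ to get pointwise comparability of $|\Delta^F_\mu(x,r)|$ and $|\Delta_\mu(x,r)|$ for $0<r\le1$, and transfer (b) and (c) to Theorem~\ref{thm:TT}. The only cosmetic difference is that the paper observes every such $x$ already lies in $\supp(\mu)$, since $\Theta^n_*(x,\mu)>0$ forces $\mu(B(x,r))>0$, whereas you discard a $\mu$-null set; both are fine.
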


\begin{proof}
Let $x$ be a point where $0<\Theta^n_*(x,\mu)\le\Theta^{n,*}(x,\mu)<\infty$; then
$x\in\supp(\mu)$, since $\mu(B(x,r))>0$ for small $r$. Let
$J_x=[a_x,b_x]$ be the interval of Lemma~\ref{lem:confinement-ae} and let
$\lambda_x:=\lambda_{J_x}$ be the bi-Lipschitz constant of $F$ on $J_x$. For
$0<r\le1$ both $\Dm(x,r)$ and $\Dm(x,2r)$ belong to $J_x$, and the argument of
Lemma~\ref{lem:pointwise} gives
\begin{equation}\label{eq:comparable-ae}
\lambda_x^{-1}\,|\Delta_\mu(x,r)|\;\le\;|\Delta^F_\mu(x,r)|\;\le\;\lambda_x\,|\Delta_\mu(x,r)|
\qquad\text{for all } 0<r\le1.
\end{equation}
The constant $\lambda_x$ depends on $x$ but not on $r$. Therefore, at every such $x$,
$\int_0^1|\Delta^F_\mu(x,r)|^2\,\frac{dr}{r}<\infty$ if and only if
$\int_0^1|\Delta_\mu(x,r)|^2\,\frac{dr}{r}<\infty$, and
$\lim_{r\to0}\Delta^F_\mu(x,r)=0$ if and only if $\lim_{r\to0}\Delta_\mu(x,r)=0$. Thus
(b) and (c) are equivalent to conditions (b) and (c) of Theorem~\ref{thm:TT}
respectively, and the theorem follows from Theorem~\ref{thm:TT}.
\end{proof}

\begin{corollary}\label{cor:TT-log}
Let $\mu$ be a Radon measure in $\R^d$ such that
$0<\Theta^n_*(x,\mu)\le\Theta^{n,*}(x,\mu)<\infty$ for $\mu$-a.e.\ $x\in\R^d$. The
following are equivalent:
\begin{itemize}
\item[(a)] $\mu$ is $n$-rectifiable.
\item[(b)] $\displaystyle\int_0^1\Bigl|\log\frac{\mu(B(x,r))}{\mu(B(x,2r))}+n\log2\Bigr|^2
\,\frac{dr}{r}<\infty$ for $\mu$-a.e.\ $x\in\R^d$.
\item[(c)] $\displaystyle\lim_{r\to0}\frac{\mu(B(x,r))}{\mu(B(x,2r))}=2^{-n}$ for
$\mu$-a.e.\ $x\in\R^d$.
\end{itemize}
The same holds for a Borel set $E\subset\R^d$ with $\mathcal H^n(E)<\infty$ and
$\Theta^n_*(x,\mathcal H^n|_E)>0$ for $\mathcal H^n$-a.e.\ $x\in E$, with
$\mu=\mathcal H^n|_E$ and ``$E$ is $n$-rectifiable'' in place of (a).
\end{corollary}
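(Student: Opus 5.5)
The plan is to apply Theorem~\ref{thm:TT-F} with $F=\log$. By Remark~\ref{rem:scope}, $\log\in C^1(0,\infty)$ has $\log'(t)=1/t\ne0$, so $\log$ is locally bi-Lipschitz; on a compact $J=[a,b]$ the mean value theorem gives the constant $\lambda_J=\max(b,1/a)$. For $x$ with $0<\Theta^n_*(x,\mu)\le\Theta^{n,*}(x,\mu)<\infty$ we have $x\in\supp(\mu)$ and $\mu(B(x,r))>0$ for all $r>0$. As in Corollary~\ref{cor:log}, for such $x$ and $0<r\le1$,
\[
\Delta^{\log}_\mu(x,r)=\log\frac{\mu(B(x,r))}{\mu(B(x,2r))}+n\log2 .
\]
Hence condition (b) of Theorem~\ref{thm:TT-F} is literally condition (b) here, and (a)$\Leftrightarrow$(b) follows.

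For (c), we use that $\exp$ is continuous and $\log$ is continuous at $2^{-n}$. This gives
\[
\log\frac{\mu(B(x,r))}{\mu(B(x,2r))}+n\log2\to0
\quad\Longleftrightarrow\quad
\frac{\mu(B(x,r))}{\mu(B(x,2r))}\to2^{-n},
\]
which is the equivalence of (c) in Theorem~\ref{thm:TT-F} with (c) here. This step is pointwise and uses nothing about $\mu$ beyond positivity of the ball masses, which we have at every point where the density hypothesis holds.

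For the set version, set $\mu=\mathcal H^n|_E$, a Radon measure since $\mathcal H^n(E)<\infty$. We check the density hypothesis. The lower density is positive $\mathcal H^n$-a.e.\ on $E$ by assumption. The upper density satisfies $\Theta^{n,*}(x,\mathcal H^n|_E)\le 1$ (up to the normalization constant of $\mathcal H^n$) for $\mathcal H^n$-a.e.\ $x\in E$, by the standard upper density theorem for sets of finite $\mathcal H^n$ measure \cite{Ma}. Since $\mu$-a.e.\ coincides with $\mathcal H^n$-a.e.\ on $E$, and $\mu$ vanishes off $E$, the hypothesis holds $\mu$-a.e.

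It remains to match (a): $\mu=\mathcal H^n|_E$ is an $n$-rectifiable measure iff $E$ is an $n$-rectifiable set. If $E$ is rectifiable this is immediate from the definition. Conversely, if $\mu$ vanishes outside a rectifiable $E'$, then $\mathcal H^n(E\setminus E')=0$, so $E$ is rectifiable.

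The main (mild) obstacle is this last part: quoting the correct density theorem, and making sure the a.e.\ notions and the definition of rectifiable measure from \cite[Definition 16.6]{Ma} match the set notion. Everything else is a direct substitution into Theorem~\ref{thm:TT-F}.
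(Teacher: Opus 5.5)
Your proposal is correct and follows the paper's proof essentially step for step. You apply Theorem~\ref{thm:TT-F} with $F=\log$, identify (c) by continuity of $\exp$ and $\log$, and for the set version combine the upper density theorem \cite[Theorem 6.2]{Ma} with the equivalence between rectifiability of the measure $\mathcal H^n|_E$ and of the set $E$; the only nit is that with the $r^n$ normalization used here and Mattila's $\mathcal H^n$ the bound reads $\Theta^{n,*}(x,\mathcal H^n|_E)\le 2^n$ rather than $\le 1$, which is harmless since only finiteness is needed.
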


\begin{proof}
The function $\log$ is $C^1$ on $(0,\infty)$ with $(\log)'(t)=1/t\ne0$, so it is
locally bi-Lipschitz, and $\Delta^{\log}_\mu(x,r)=\log\frac{\mu(B(x,r))}{\mu(B(x,2r))}+n\log2$
by Corollary~\ref{cor:log}. Condition (c) of Theorem~\ref{thm:TT-F} for $F=\log$ says
that $\log\frac{\mu(B(x,r))}{\mu(B(x,2r))}\to-n\log2$, which is (c) above by continuity
of $\exp$. For the set version, recall that $\Theta^{n,*}(x,\mathcal H^n|_E)\le2^n$ for
$\mathcal H^n$-a.e.\ $x\in E$ when $\mathcal H^n(E)<\infty$ (see
\cite[Theorem 6.2]{Ma}, where the density is normalized by $(2r)^n$ instead of
$r^n$), so that $\mu=\mathcal H^n|_E$ satisfies the hypotheses, and
that $\mathcal H^n|_E$ is $n$-rectifiable as a measure if and only if $E$ is
$n$-rectifiable as a set, which is immediate from the definitions. The set version
is stated in \cite[Corollary 1.2]{TT}.
\end{proof}

\begin{remark}
Condition (c) of Corollary~\ref{cor:TT-log} is a statement about the doubling ratio
alone: at $\mu$-a.e.\ point the ratio $\mu(B(x,r))/\mu(B(x,2r))$ has the limit $2^{-n}$
that an $n$-plane has. Note that this is a condition on the existence of a limit of a
ratio of masses, in the spirit of Preiss' theorem, and that it involves the dimension
$n$ only through the value of the limit. Let us also mention that for the equivalence
of (c) in Theorems~\ref{thm:TT} and~\ref{thm:TT-F} much less than bi-Lipschitz is
needed: it suffices that $F$ be continuous and strictly monotone on $(0,\infty)$, since
then $F$ and $F^{-1}$ are uniformly continuous on the compact intervals $J_x$ and
$F(J_x)$ respectively. The bi-Lipschitz condition is used only for the equivalence of
the square function conditions (b).

We stress that Theorem~\ref{thm:TT-F} is qualitative. The comparability constant
$\lambda_x$ in \eqref{eq:comparable-ae} depends on $x$ through the interval $J_x$,
which in turn depends on the densities at $x$ and on the scale $r_x$ at which they
stabilize, and nothing is claimed about the size of the integrals in (b). The
quantitative statement, with Carleson constants, is Theorem~\ref{thm:main}, and it is
there that AD-regularity is used.
\end{remark}

\section{Concluding remarks}

We have shown that the square function characterization of uniform rectifiability in
\cite{CGLT} is invariant under bi-Lipschitz changes of variables in the density, both
for the density $\mu(B(x,r))/r^n$ and for its smooth versions $\phi_t*\mu(x)$, with
constants that differ by at most a factor $\lambda^2$, and that for $F=\log$ the
statement holds with no restriction on $\mu(\R^d)$. We have also shown that the
qualitative characterization of $n$-rectifiability of \cite{TT} is invariant under
locally bi-Lipschitz changes of variables, without any AD-regularity or doubling
assumption. We end with two questions. First,
for measures which are not AD-regular the choice $F=\log$ is no longer a change of
variables at the quantitative level: $\Delta^{\log}_\mu$ depends only on the doubling
ratio and is finite and scale-invariant for any doubling measure, while $\Delta_\mu$
need not be comparable to it with uniform constants. It is not known to us whether a
Carleson-type condition on $\Delta^{\log}_\mu$ characterizes some quantitative form of
rectifiability for doubling measures which are not AD-regular, in the
spirit of \cite{To-memoir} and \cite{ADT}, and the arguments in this paper do not apply
to this case; Section~\ref{sec:qualitative} gives only the qualitative statement. This question is relevant to \cite{Le-URJEPA}, where $\Delta^{\log}_\mu$
is used as a regularizer without imposing AD-regularity: in that setting
Theorems~\ref{thm:main} and~\ref{thm:smooth} do not apply as stated, and a positive
answer would give the regularizer a characterization of rectifiability to rest on. Second, a proof of
Conjecture~\ref{prop:degenerate-hard} along the lines sketched in
Section~\ref{sec:degenerate} would give the sufficiency implication for all
continuous strictly monotone $F$, if the strategy can be carried out. The difficulty
seems technical, but the constants would depend on the modulus of continuity of
$F^{-1}$, and a precise statement would have to keep track of this dependence.

\appendix

\section{Numerical evaluation}\label{app:numerics}

\begin{remark}[numerical evaluation]\label{rem:numerics}
Although Corollary~\ref{cor:log} is only a change of variables, the quantity
$\Delta^{\log}_\mu$ is better suited than $\Delta_\mu$ for numerical computation. This
is the reason why the regularizer in UR--JEPA \cite{Le-URJEPA} is built from
$\Delta^{\log}_\mu$, where $\mu$ is the empirical measure of the learned embeddings,
rather than from $\Delta_\mu$. We list the following three reasons.

(i) \emph{Only a ratio of masses is needed.} Since $B(x,r)\subset B(x,2r)$ we have
$\mu(B(x,r))\le\mu(B(x,2r))$, and by \eqref{eq:ADR}, for $x\in\supp(\mu)$ and
$2r\le\diam(\supp(\mu))$,
\begin{equation}\label{eq:ratio}
2^{-n}c_0^{-2}\;\le\;\frac{\mu(B(x,r))}{\mu(B(x,2r))}\;\le\;1,
\qquad\text{hence}\qquad
-2\log c_0\;\le\;\Delta^{\log}_\mu(x,r)\;\le\;n\log 2.
\end{equation}
Thus $\Delta^{\log}_\mu(x,r)$ is obtained by evaluating $\log$ at a number in the fixed
interval $[2^{-n}c_0^{-2},1]$, whatever the scale $r$ is, and no quantity of size
$r^{n}$ or $r^{-n}$ is ever formed. To compute $\Delta_\mu(x,r)$ one has to form
$\mu(B(x,r))/r^n$ and $\mu(B(x,2r))/(2r)^n$ separately; when $r$ ranges over many dyadic
scales these are quotients of numbers whose magnitudes depend on the units of length and
of mass, and in floating point arithmetic $r^{\pm n}$ overflows or underflows once
$n\log_2 r$ exceeds the exponent range. In practice one should evaluate the ratio
first and take one logarithm, that is, compute
$\log\bigl(\mu(B(x,r))/\mu(B(x,2r))\bigr)$ rather than
$\log\mu(B(x,r))-\log\mu(B(x,2r))$, so that the result is the logarithm of a number
in $[2^{-n}c_0^{-2},1]$ and no cancellation between two large logarithms occurs.

(ii) \emph{Invariance under normalization.} For $a>0$ we have
$\Delta^{\log}_{a\mu}=\Delta^{\log}_\mu$, while $\Delta_{a\mu}=a\Delta_\mu$. Taking
into account that the measure $d\mu$ in the integrals is also replaced by $a\,d\mu$,
the Carleson constant in \eqref{eq:carleson} scales like $a^3$, while the one in
\eqref{eq:carlesonF} for $F=\log$ scales like $a$, the factor coming only from the
measure. Similarly, if $\mu$ is replaced by
its image under the dilation $x\mapsto sx$ and $r$ by $sr$, then $\Delta^{\log}_\mu$
is unchanged, whereas $\Delta_\mu$ is multiplied by $s^{-n}$ unless $\mu$ is rescaled
accordingly. In particular, for an empirical measure $\mu_N=\frac1N\sum_{i=1}^N
\delta_{y_i}$ the quantity $\Delta^{\log}_{\mu_N}(x,r)$ is the logarithm of the ratio of
two point counts, plus $n\log 2$, and depends neither on $N$ nor on the choice of
units.

(iii) \emph{Dependence on the dimension.} The dimension $n$ enters
$\Delta^{\log}_\mu$ only through the additive constant $n\log 2$. If $n$ is replaced
by $n+\delta$, then $\Delta^{\log}_\mu$ changes by the constant $\delta\log 2$ at every
$(x,r)$, whereas $\Delta_\mu$ acquires the factors $r^{-\delta}$ and $(2r)^{-\delta}$,
which are not bounded uniformly in $r$. This is relevant in \cite{Le-URJEPA}, where
the intrinsic dimension $n$ of the embeddings is not known exactly and has to be
estimated from the data.

We remark that these statements concern the evaluation of the two quantities and
not the Carleson constants in Theorem~\ref{thm:main}: by Corollary~\ref{cor:log} the
constants for $\Delta_\mu$ and $\Delta^{\log}_\mu$ can differ by a factor as large as
$c_0^2$, in either direction.
\end{remark}

\begin{remark}\label{rem:numerics-smooth}
The same considerations from Remark~\ref{rem:numerics} apply to the smooth square
functions as well. Here, the role of $\mu(B(x,r))/\mu(B(x,2r))$ is played by
\[
\frac{\phi_t*\mu(x)}{\phi_{2t}*\mu(x)}
=2^n\,\frac{\int\phi\bigl((y-x)/t\bigr)\,d\mu(y)}{\int\phi\bigl((y-x)/(2t)\bigr)\,d\mu(y)},
\]
so that $\Delta^{\log}_{\mu,\phi}(x,t)=\log\bigl(\phi_t*\mu(x)/\phi_{2t}*\mu(x)\bigr)$ is
$n\log2$ plus the logarithm of a ratio of two kernel weighted masses, in which no
power of $t$ appears and the dimension enters only through the additive constant. By
Lemma~\ref{lem:confinement-smooth} this ratio lies in the fixed interval
$[\phi(1)/(C_\phi c_0^2),\,C_\phi c_0^2/\phi(1)]$ for $x\in\supp(\mu)$ and all $t>0$;
unlike the ratio of masses of balls it is not bounded by $1$, since $t\mapsto\phi_t*\mu(x)$
need not be monotone. The invariance under $\mu\mapsto a\mu$ and under dilations
holds verbatim, for an empirical measure the ratio is a quotient of two kernel weighted
sums in which $N$ cancels, and the derivative version
$\widetilde\Delta^{\log}_{\mu,\phi}=\widetilde\Delta_{\mu,\phi}/D_{\mu,\phi}$ has the same
properties. The same considerations also apply to higher order square functions.
\end{remark}

\section{Proof of Proposition~\ref{prop:higher}}\label{app:higher}

We first prove the uniform bound on the higher order square functions.

\begin{lemma}\label{lem:bounded-higher}
Let $\mu$ be $n$-AD-regular with constant $c_0$ and $\mu(\R^d)=\infty$, and let $\phi$
be as in Theorem~\ref{thm:smooth}. Then the function
$t\mapsto D_{\mu,\phi}(x,t)$ is $C^\infty$ on $(0,\infty)$ for each $x$, and for every
$j\ge1$ there is a constant $C_j$ depending on $j$, $\phi$, $n$ and $c_0$ such that
\[
|\widetilde\Delta^j_{\mu,\phi}(x,t)|\le C_j
\qquad\text{for all } x\in\supp(\mu),\ t>0.
\]
\end{lemma}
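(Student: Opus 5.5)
The plan is to differentiate under the integral sign and then reuse the annular decomposition from the proof of Lemma~\ref{lem:confinement-smooth}, with $\phi$ replaced by the kernels $\partial^j_\phi$.

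First, compute the kernels. Since $\phi$ is radial, write $\phi(x)=g(|x|)$, so that $\phi_t(x)=t^{-n}g(|x|/t)$. Then $t\partial_t$ applied to $t^{-n}h(|x|/t)$ equals $t^{-n}\tilde h(|x|/t)$ with $\tilde h(\rho)=-nh(\rho)-\rho h'(\rho)$. Iterating, and using $t^k\partial_t^k=\prod_{i=0}^{k-1}(t\partial_t-i)$, we get
\[
\partial^j_\phi(x,t)=t^j\partial_t^j\phi_t(x)=t^{-n}\psi_j(|x|/t),
\qquad
\psi_j(\rho)=\sum_{m=0}^{j}a_{j,m}\,\rho^m g^{(m)}(\rho),
\]
where the $a_{j,m}$ are constants depending only on $j$ and $n$. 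For $g(\rho)=e^{-\rho^{2N}}$, each $\rho^mg^{(m)}$ is a polynomial times $e^{-\rho^{2N}}$. For $g(\rho)=(1+\rho^2)^{-a}$, we have $|\rho^mg^{(m)}(\rho)|\le C_m(1+\rho^2)^{-a}$, by induction: each derivative gains a factor $O(\rho/(1+\rho^2))$. Hence in both cases
\[
|\psi_j(\rho)|\le A_j\,\Psi(\rho),
\]
where $\Psi$ is a nonincreasing majorant with $\Psi(0)+\sum_{k\ge1}2^{kn}\Psi(2^{k-1})<\infty$. For the polynomial kernel take $\Psi=g$. For the Gaussian-type kernel take, say, $\Psi(\rho)=e^{-\rho^{2N}/2}$, absorbing the polynomial factor into $A_j$.

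Second, smoothness. Fix $x$ and a compact interval $[t_1,t_2]\subset(0,\infty)$. On this range,
\[
|\partial_t^j\phi_t(y-x)|\le C\,t_1^{-n-j}\,\Psi(|y-x|/t_2),
\]
uniformly in $t$. This dominating function is $\mu$-integrable by the same annular sum, using \eqref{eq:ADR} at radii $2^kt_2$, and this holds for all $x$, not only $x\in\supp(\mu)$: for $x\notin\supp(\mu)$ one applies \eqref{eq:ADR} at a nearby support point with doubled radii. Dominated convergence then allows differentiating under the integral sign to every order, so $t\mapsto D_{\mu,\phi}(x,t)$ is $C^\infty$ and
\[
\widetilde\Delta^j_{\mu,\phi}(x,t)=\int\partial^j_\phi(y-x,t)\,d\mu(y).
\]

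Third, the bound. For $x\in\supp(\mu)$, split $\R^d$ into $B(x,t)$ and the annuli $A_k$ exactly as in Lemma~\ref{lem:confinement-smooth}, and use $|\partial^j_\phi(y-x,t)|\le A_jt^{-n}\Psi(|y-x|/t)$. This gives
\[
|\widetilde\Delta^j_{\mu,\phi}(x,t)|\le A_j\,c_0\Bigl(\Psi(0)+\sum_{k\ge1}2^{kn}\Psi(2^{k-1})\Bigr)=:C_j .
\]
The admissibility of all radii follows from $\mu(\R^d)=\infty$, as in Lemma~\ref{lem:confinement}.

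The main obstacle is the first step: the kernel computation and verifying the uniform radial majorant $|\rho^mg^{(m)}(\rho)|\lesssim\Psi(\rho)$ with $\Psi$ summable against $2^{kn}$. For $(1+\rho^2)^{-a}$ this needs the precise decay $\rho^{-2a}$ with $2a>n$, which the inductive estimate above preserves. Everything else is a repetition of the argument in Lemma~\ref{lem:confinement-smooth}.
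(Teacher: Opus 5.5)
Your proposal is correct and follows essentially the same route as the paper. You write $\partial^j_\phi(z,t)=t^{-n}\psi_j(z/t)$, dominate $\psi_j$ by a radial nonincreasing majorant $\Psi$ with $\Psi(0)+\sum_k 2^{kn}\Psi(2^{k-1})<\infty$, and rerun the annulus argument of Lemma~\ref{lem:confinement-smooth}; your radial bookkeeping via $\rho^m g^{(m)}(\rho)$ and your explicit dominated-convergence step (including $x\notin\supp(\mu)$) only make the paper's brief justification more detailed. One wording fix: justify $|\rho^m g^{(m)}(\rho)|\le C_m(1+\rho^2)^{-a}$ by writing $g^{(m)}(\rho)=P_m(\rho)(1+\rho^2)^{-a-m}$ with $\deg P_m\le m$. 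The phrase ``each derivative gains a factor $O(\rho/(1+\rho^2))$'' is not literally true near $\rho=0$, although the bound you conclude is correct.
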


\begin{proof}
Differentiating $\phi_t(z)=t^{-n}\phi(z/t)$ $j$ times in $t$ gives
$\partial^j_\phi(z,t)=t^j\partial_t^j\phi_t(z)=t^{-n}\psi_j(z/t)$, where $\psi_j$ is a
finite linear combination of the functions $z^\alpha\partial^\alpha\phi(z)$ with
$|\alpha|\le j$. For $\phi(z)=e^{-|z|^{2N}}$ the derivatives of $\phi$ are polynomials
times $\phi$, and for $\phi(z)=(1+|z|^2)^{-a}$ one has
$|\partial^\alpha\phi(z)|\lesssim(1+|z|)^{-2a-|\alpha|}$; in both cases
$|\psi_j(z)|\le C(1+|z|)^{-2a}$ with $a>n/2$ (in the Gaussian case this holds for
every $a$, and we fix $a=n$). The differentiation under the integral sign is
justified by this decay together with \eqref{eq:ADR}. For the bound, note that
$\psi_j$ itself need not be radial or monotone, but it is majorized by the radial
nonincreasing function $\eta(z):=C(1+|z|)^{-2a}$, and the annulus argument in the
proof of Lemma~\ref{lem:confinement-smooth} applied to $\eta$ gives
$|\widetilde\Delta^j_{\mu,\phi}(x,t)|\le\int t^{-n}\eta((y-x)/t)\,d\mu(y)
\le c_0\bigl(\eta(0)+\sum_{k\ge1}2^{kn}\eta(2^{k-1})\bigr)=:C_j$, which is finite
since $2a>n$.
\end{proof}

\begin{proof}[Proof of Proposition~\ref{prop:higher}]
Since $F\in C^1$ and $\lambda^{-1}\le|F'|\le\lambda$ on $J_\phi$, the mean value
theorem gives that $F$ is bi-Lipschitz on $J_\phi$ with constant $\lambda$ (see
Remark~\ref{rem:scope}), so Theorem~\ref{thm:smooth} applies to $F$. For $k=1$ the
proposition is Theorem~\ref{thm:smooth}, so we assume $k\ge2$. Note that the lower
bound $|F'|\ge\lambda^{-1}$ is used only in the sufficiency direction, at the very last
step, while the necessity direction uses only the upper bounds on the derivatives of
$F$. By Lemma~\ref{lem:confinement-smooth}, $D_{\mu,\phi}(x,t)\in J_\phi$ for $x\in\supp(\mu)$,
so the derivatives of $F$ in \eqref{eq:faa} are evaluated in $J_\phi$ and are bounded
by $\lambda$ and $\Lambda$. Together with Lemma~\ref{lem:bounded-higher} this gives,
for $1\le j\le k$ and all $x\in\supp(\mu)$, $t>0$,
\begin{equation}\label{eq:higher-pointwise}
|\widetilde\Delta^{F,j}_{\mu,\phi}|\le C
\qquad\text{and}\qquad
|\widetilde\Delta^{F,j}_{\mu,\phi}|
\le\lambda\,|\widetilde\Delta^j_{\mu,\phi}|+C\sum_{i<j}|\widetilde\Delta^i_{\mu,\phi}|,
\end{equation}
where in the second inequality we bounded each product in \eqref{eq:faa}, which
contains at least one factor $\widetilde\Delta^i_{\mu,\phi}$ with $i<j$, by a constant
times that factor.

Suppose first that $\mu$ is uniformly $n$-rectifiable. By
\cite[Proposition 1.3]{CGLT}, each $\widetilde\Delta^i_{\mu,\phi}$, $1\le i\le k$,
satisfies the Carleson condition \eqref{eq:carleson} with $\Delta_\mu$ replaced by
$\widetilde\Delta^i_{\mu,\phi}$. Squaring the second inequality in
\eqref{eq:higher-pointwise} with $j=k$ and integrating gives \eqref{eq:carlesonFk}.

Suppose now that \eqref{eq:carlesonFk} holds. Fix $x\in\supp(\mu)$ and write
$g(t)=F(D_{\mu,\phi}(x,t))$ and $Y_j(t)=t^jg^{(j)}(t)=\widetilde\Delta^{F,j}_{\mu,\phi}(x,t)$.
Since $t\partial_t(t^{j-1}g^{(j-1)})=(j-1)t^{j-1}g^{(j-1)}+t^jg^{(j)}$, we have
\begin{equation}\label{eq:ode}
t\,Y_{j-1}'(t)=(j-1)\,Y_{j-1}(t)+Y_j(t),\qquad 2\le j\le k.
\end{equation}
The function
\[
Z(t):=-\int_t^\infty\Bigl(\frac{t}{t'}\Bigr)^{j-1}Y_j(t')\,\frac{dt'}{t'}
\]
is well defined and bounded, since $Y_j$ is bounded by \eqref{eq:higher-pointwise} and
$\int_t^\infty(t/t')^{j-1}\,dt'/t'=1/(j-1)$, and a direct computation shows that $Z$
satisfies \eqref{eq:ode} as well. Hence $W=Y_{j-1}-Z$ satisfies $tW'=(j-1)W$, so
$W(t)=Ct^{j-1}$, and since $W$ is bounded on $(0,\infty)$ we get $C=0$. Thus
\begin{equation}\label{eq:descent}
\widetilde\Delta^{F,j-1}_{\mu,\phi}(x,t)
=-\int_t^\infty\Bigl(\frac{t}{t'}\Bigr)^{j-1}\widetilde\Delta^{F,j}_{\mu,\phi}(x,t')\,\frac{dt'}{t'},
\qquad 2\le j\le k.
\end{equation}
We claim that if $\widetilde\Delta^{F,j}_{\mu,\phi}$ satisfies the Carleson condition
with constant $c_j$, then so does $\widetilde\Delta^{F,j-1}_{\mu,\phi}$, with a constant
depending on $c_j$, $j$, $C$ and $c_0$. Fix a ball $B(x_0,R)$ with $x_0\in\supp(\mu)$
and split the integral in \eqref{eq:descent} at $t'=R$. For the part $t'>R$ we use
$|\widetilde\Delta^{F,j}_{\mu,\phi}|\le C$, which gives the bound
$C(t/R)^{j-1}/(j-1)$; its square integrates to
\[
\int_0^R\!\!\int_{B(x_0,R)}\frac{C^2}{(j-1)^2}\Bigl(\frac tR\Bigr)^{2(j-1)}d\mu(x)\,\frac{dt}{t}
=\frac{C^2\,\mu(B(x_0,R))}{2(j-1)^3}\le\frac{C^2c_0}{2(j-1)^3}\,R^n,
\]
by \eqref{eq:ADR}, which applies since $\diam(\supp(\mu))=\infty$. For the part
$t'\le R$, in the variable $s=\log t$ the function
$t\mapsto\int_t^R(t/t')^{j-1}\widetilde\Delta^{F,j}_{\mu,\phi}(x,t')\,dt'/t'$ equals
$\int_\R K(s-\sigma)\,h(\sigma)\,d\sigma$ with
$h(\sigma)=\widetilde\Delta^{F,j}_{\mu,\phi}(x,e^{\sigma})\,\chi_{\{e^\sigma\le R\}}$ and
$K(u)=e^{(j-1)u}\chi_{\{u\le0\}}$, that is, the convolution of $h$ with $K$, and
$\|K\|_{L^1(\R)}=1/(j-1)$. By Young's
inequality, for each $x$,
\[
\int_0^R\Bigl|\int_t^R\Bigl(\frac{t}{t'}\Bigr)^{j-1}\widetilde\Delta^{F,j}_{\mu,\phi}(x,t')\,\frac{dt'}{t'}\Bigr|^2\frac{dt}{t}
\le\frac1{(j-1)^2}\int_0^R|\widetilde\Delta^{F,j}_{\mu,\phi}(x,t)|^2\,\frac{dt}{t},
\]
and integrating in $x\in B(x_0,R)$ gives the bound $(j-1)^{-2}c_jR^n$. This proves
the claim. Applying it for $j=k,k-1,\dots,2$ we obtain the Carleson condition for
$\widetilde\Delta^{F,1}_{\mu,\phi}=\widetilde\Delta^F_{\mu,\phi}=F'(D_{\mu,\phi})\,
\widetilde\Delta_{\mu,\phi}$, and Theorem~\ref{thm:smooth} gives that $\mu$ is
uniformly $n$-rectifiable. It is here that the lower bound $|F'|\ge\lambda^{-1}$ on
$J_\phi$ is needed: it is what allows one to pass from the Carleson condition for
$F'(D_{\mu,\phi})\widetilde\Delta_{\mu,\phi}$ to the one for $\widetilde\Delta_{\mu,\phi}$.
\end{proof}

\section{Proof of Proposition~\ref{prop:bounded}}\label{app:bounded}

We first prove the bound on the total mass of $\mu$ used in
Section~\ref{sec:bounded}. Note that
\eqref{eq:ADR} at the radius $r=R_0$ bounds $\mu(B(x,R_0))$, but the ball $B(x,R_0)$ may
omit points of $\supp(\mu)$ at distance exactly $R_0$ from $x$, and these may carry
positive mass. We use a covering argument instead.

\begin{lemma}\label{lem:totalmass}
Let $\mu$ be $n$-AD-regular with constant $c_0$ and $R_0=\diam(\supp(\mu))<\infty$.
Then
\[
c_0^{-1}R_0^n\;\le\;\mu(\R^d)\;\le\;c_1R_0^n=5^dc_0R_0^n.
\]
\end{lemma}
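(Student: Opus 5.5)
The lower bound is the easy half and I will do it first. Pick any $x\in\supp(\mu)$ and apply \eqref{eq:ADR} at the admissible radius $r=R_0$. This gives $\mu(\R^d)\ge\mu(B(x,R_0))\ge c_0^{-1}R_0^n$.

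For the upper bound I will not use a single ball of radius $R_0$, since, as noted before the statement, it may miss mass sitting at distance exactly $R_0$. Instead I will cover $\supp(\mu)$ by finitely many balls of radius $R_0/2$ centered on the support and count them with a volume argument.

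\textbf{Step 1: the net.} Since $\supp(\mu)$ is compact, I choose a maximal $R_0/2$-separated set $\{x_1,\dots,x_N\}\subset\supp(\mu)$, meaning $|x_i-x_j|\ge R_0/2$ for $i\ne j$. It is finite by compactness. By maximality, every point of $\supp(\mu)$ lies within distance $<R_0/2$ of some $x_i$. Hence $\supp(\mu)\subset\bigcup_i B(x_i,R_0/2)$, and since $\mu(\R^d\setminus\supp(\mu))=0$,
\[
\mu(\R^d)\le\sum_{i=1}^N\mu(B(x_i,R_0/2))\le N c_0 (R_0/2)^n,
\]
using \eqref{eq:ADR} at radius $R_0/2\le R_0$.

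\textbf{Step 2: counting.} The balls $B(x_i,R_0/4)$ are pairwise disjoint. Each is contained in $B(x_1,R_0+R_0/4)$, because $|x_i-x_1|\le R_0$. Comparing Lebesgue measures in $\R^d$ gives
\[
N\,(R_0/4)^d\le (5R_0/4)^d,
\]
so $N\le 5^d$. Combining with Step 1, $\mu(\R^d)\le 5^d c_0 2^{-n}R_0^n\le 5^dc_0R_0^n=c_1R_0^n$, which is the claim. The factor $2^{-n}$ is simply discarded.

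The only delicate point is Step 1. The covering balls must have radius strictly at most $R_0$ so that \eqref{eq:ADR} applies, and they must be centered on $\supp(\mu)$. Open balls of radius $R_0/2$ around a maximal $R_0/2$-net satisfy both requirements. The Lebesgue volume comparison in Step 2 is where the ambient dimension $d$, and hence the constant $5^d$, enters.
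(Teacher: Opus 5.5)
Your proposal is correct and follows the paper's own proof essentially step for step. The paper also gets the lower bound from \eqref{eq:ADR} at $r=R_0$, covers $\supp(\mu)$ by the balls $B(z_i,R_0/2)$ around a maximal $R_0/2$-separated family, and bounds $N\le 5^d$ by placing the disjoint balls $B(z_i,R_0/4)$ inside $B(z_1,\tfrac54R_0)$ and comparing Lebesgue measures.
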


\begin{proof}
The lower bound is \eqref{eq:ADR} at $r=R_0$ for any $x\in\supp(\mu)$. For the upper
bound, let $\{z_i\}_{i=1}^N\subset\supp(\mu)$ be a maximal family of points with
$|z_i-z_j|\ge R_0/2$ for $i\ne j$. By maximality every $y\in\supp(\mu)$ satisfies
$|y-z_i|<R_0/2$ for some $i$, so $\supp(\mu)\subset\bigcup_iB(z_i,R_0/2)$. The balls
$B(z_i,R_0/4)$ are pairwise disjoint and contained in $B(z_1,\tfrac54R_0)$, so
comparing Lebesgue measures gives $N(R_0/4)^d\le(\tfrac54R_0)^d$, that is, $N\le5^d$.
Since $R_0/2\le R_0$ is admissible in \eqref{eq:ADR},
\[
\mu(\R^d)\;\le\;\sum_{i=1}^N\mu\bigl(B(z_i,R_0/2)\bigr)\;\le\;5^d\,c_0\,(R_0/2)^n
\;\le\;c_1R_0^n.
\qedhere
\]
\end{proof}

\begin{proof}[Proof of Proposition~\ref{prop:bounded}]
By regimes (i) and (ii) of Section~\ref{sec:bounded}, both densities lie in $J_1$ for every
$x\in\supp(\mu)$ and every $0<r\le R_0$, so the pointwise comparability
\eqref{eq:comparable} holds on that range of scales with $\lambda_1$ in place of
$\lambda$; for $F=\log$ the mean value theorem on $J_1$ gives
$\lambda_1=\max(c_1,2^nc_0)=5^dc_0$, since $n<d$.

\emph{Proof of (a).} Assume \eqref{eq:carlesonF}. Fix a ball $B(x_0,R)$ with
$x_0\in\supp(\mu)$; we verify \eqref{eq:carleson} for it. If $R\le R_0$, comparability on
$(0,R]$ gives
\eqref{eq:carleson} with constant $\lambda_1^2c$ directly. If $R>R_0$, split the
$r$-integral at $R_0$. On $(0,R_0]$ the same comparison applies. On $(R_0,R]$ we use only
the trivial bound: for $r>R_0$ and any $x$,
\[
|\Delta_\mu(x,r)|\le \Dm(x,r)=\frac{\mu(\R^d)}{r^n}\le c_1\Bigl(\frac{R_0}{r}\Bigr)^{n}
\]
by Lemma~\ref{lem:totalmass}, so that
\[
\int_{R_0}^{R}\!\!\int_{B(x_0,R)}|\Delta_\mu|^2\,d\mu\,\frac{dr}{r}
\;\le\;\mu(\R^d)\,c_1^2\int_{R_0}^{\infty}\Bigl(\frac{R_0}{r}\Bigr)^{2n}\frac{dr}{r}
\;=\;\frac{c_1^2\,\mu(\R^d)}{2n}
\;\le\;\frac{c_1^3}{2n}\,R_0^n\;\le\;\frac{c_1^3}{2n}\,R^n.
\]
Hence \eqref{eq:carleson} holds with constant $\lambda_1^2c+c_1^3/(2n)$, and
Theorem~\ref{thm:CGLT} gives that $\mu$ is uniformly $n$-rectifiable. Note that the
integral over $(R_0,R]$ was estimated for $\Delta_\mu$ and not for $\Delta^F_\mu$. This
is the reason why no hypothesis on $F$ near $0$ is needed in (a).

\emph{Proof of (b).} Assume $\mu$ uniformly rectifiable, so \eqref{eq:carleson} holds
with some $c'$ by Theorem~\ref{thm:CGLT}. Fix $B(x_0,R)$, $x_0\in\supp(\mu)$. If
$R\le R_0$ we conclude by comparability as before. If $R>R_0$, split at $R_0$. On
$(0,R_0]$, comparability and \eqref{eq:carleson} give the bound $\lambda_1^2c'R^n$. On
$(R_0,R]$ both densities equal $\mu(\R^d)r^{-n}$ and $\mu(\R^d)(2r)^{-n}$ respectively,
so with the substitution $t=\mu(\R^d)\,r^{-n}$ (hence $\frac{dt}{t}=-n\frac{dr}{r}$, and
$t$ ranges over $[\varepsilon,\,\mu(\R^d)R_0^{-n}]$ with
$\varepsilon:=\mu(\R^d)R^{-n}$),
\[
\int_{R_0}^{R}\!\!\int_{B(x_0,R)}|\Delta^F_\mu|^2\,d\mu\,\frac{dr}{r}
\;=\;\frac{\mu(\R^d)}{n}
\int_{\varepsilon}^{\mu(\R^d)R_0^{-n}}\bigl|F(t)-F(2^{-n}t)\bigr|^2\,\frac{dt}{t}.
\]
Since $\mu(\R^d)R_0^{-n}\le c_1$ by Lemma~\ref{lem:totalmass} and
$\mu(\R^d)=\varepsilon R^n$, the right-hand side is at most
\[
\frac{R^n}{n}\cdot\varepsilon
\int_{\varepsilon}^{c_1}\bigl|F(t)-F(2^{-n}t)\bigr|^2\,\frac{dt}{t}
\;\lesssim\;R^n
\]
by \eqref{eq:tail}. If $|F(t)-F(2^{-n}t)|\le M$ for all $t\in(0,c_1]$, then
$\varepsilon\int_\varepsilon^{c_1}(\cdot)\,\frac{dt}{t}\le
M^2\,\varepsilon\log(c_1/\varepsilon)$, which is bounded uniformly in
$\varepsilon\in(0,c_1]$; thus \eqref{eq:tail} holds. If $F$ is continuous on
$(0,\infty)$, then $t\mapsto F(t)-F(2^{-n}t)$ is bounded on every compact subinterval
of $(0,\infty)$, so boundedness near $0$ already gives boundedness on $(0,c_1]$; note
that for a merely Borel $F$ this is not the case, and boundedness near $0$ alone does
not imply \eqref{eq:tail}. For $F=\log$ the integrand is the
constant $(n\log 2)^2$ and the same computation applies; alternatively, for $r>R_0$ one
has $\Delta^{\log}_\mu(x,r)\equiv n\log 2$ and the tail is bounded by
$\mu(\R^d)(n\log2)^2\log(R/R_0)\lesssim R_0^n\log(R/R_0)\lesssim R^n$, since
$u\mapsto u^n\log(1/u)$ is bounded on $(0,1]$.
\end{proof}

\end{document}